\documentclass{amsart}
\textwidth=125 mm
\textheight=195 mm
\usepackage[margin=1in]{geometry}
\usepackage{amsmath}
\usepackage{amssymb}
\usepackage{amsthm}
\usepackage{amscd}
\usepackage{enumerate}

\newcommand{\EE}{\Bbb E}

\newcommand{\ZZ}{\Bbb Z}

\newcommand{\NN}{\Bbb N}

\newcommand{\CC}{\Bbb C}
\newcommand{\ip}[1]{\langle #1 \rangle}

\newcommand{\widetidle}{\widetilde}

\newcommand{\varpesilon}{\varepsilon}

\newcommand{\varespilon}{\varepsilon}

\newcommand{\actson}{\curvearrowright}

\newtheorem{question}{Question}

\newtheorem{?}{Question}
\newtheorem{theorem}{Theorem}
\newtheorem{definition}[theorem]{Definition}
\newtheorem{proposition}[theorem]{Proposition}
\newtheorem{cor}[theorem]{Corollary}
\newtheorem{lemma}[theorem]{Lemma}

\DeclareMathOperator{\Int}{int}

\DeclareMathOperator{\Span}{Span}

\DeclareMathOperator{\Map}{Map}

\DeclareMathOperator{\id}{Id}

\DeclareMathOperator{\tr}{tr}

\DeclareMathOperator{\Hom}{Hom}

\DeclareMathOperator{\alg}{alg}

\DeclareMathOperator{\CO}{CO}
\DeclareMathOperator{\AP}{AP}
\DeclareMathOperator{\Prob}{Prob}
\DeclareMathOperator{\Ball}{Ball}

\numberwithin{theorem}{section}

\begin{document}
\title{Mixing and Spectral Gap Relative to Pinsker Factors for Sofic Groups}      % Enter your title between curly braces
\author{Ben Hayes}
\address{Stevenson Center\\
         Nashville, TN 37240}
\email{benjamin.r.hayes@vanderbilt.edu}
\date{\today}

\begin{abstract}

	Motivated by our previous results, we investigate structural properties of probability measure-preserving actions of sofic groups relative to their Pinsker factor. We also consider the same properties relative to the Outer Pinsker factor, which is another generalization of the Pinsker factor in the nonamenable case. The Outer Pinsker factor is motivated by extension entropy for extensions, which fixes some of the ``pathological'' behavior of sofic entropy: namely increase of entropy under factor maps. We show that an arbitrary probability measure-preserving action of a sofic group is mixing relative to its Pinsker and Outer Pinsker factors and, if the group is nonamenable, it has spectral gap relative to its Pinsker and Outer Pinsker factors. Our methods are similar to those we developed in ``Polish models and sofic entropy'' and based on representation-theoretic techniques. One crucial difference is that instead of considering unitary representations of a group $\Gamma$, we must consider $*$-representations of algebraic crossed products of $L^{\infty}$ spaces by $\Gamma.$

\end{abstract}

\maketitle
\tableofcontents

\section{Introduction}

	The goal of this paper is to further the investigation set out in \cite{Me6}  exploring the connections between representation theory and entropy of probability measure-preserving actions of groups, particularly for nonamenable groups. Entropy for probability measure-preserving actions of $\ZZ$ is classical and goes back to the work of Kolmogorov and Sina\v\i. Entropy is roughly a measurement of how ``chaotic'' the action of $\ZZ$ is. It was realized by Kieffer in \cite{Kieff} that one could define entropy for actions of amenable groups instead of $\ZZ$. Roughly, a group is amenable if it has a sequence of approximately translation-invariant non-empty finite subsets. Entropy theory for amenable groups has been well-studied and parallels the case of the integers quite well (see e.g. the seminal \cite{OrnWeiss} which reproves some of the fundamental isomorphism results in the amenable case).
	
	Fundamental examples in \cite{OrnWeiss} led many to believe that it was not possible to define entropy in a reasonable way for actions of nonamenable groups. In stunning and landmark work Bowen in \cite{Bow} developed a reasonable notion of entropy for the class of \emph{sofic} groups. Sofic groups are a class of groups vastly larger than amenable groups: they contain all amenable groups, all residually finite groups, all linear groups and are closed under free products with amalgamation over amenable subgroups (see \cite{ESZ2},\cite{DKP},\cite{LPaun},\cite{PoppArg}). Roughly a group is sofic if it has ``almost actions'' which are ``almost free'' on finite sets. Sofic entropy of a probability measure-preserving action $\Gamma\actson (X,\mu)$ then measures the exponential growth of the number of ``finitary simulations'' there are of the space which are compatible with the sofic approximation.
	
		We remark here that defining entropy for nonamenable groups is not merely generalization for generalizations' sake: the application of results in orbit equivalence and von Neumann algebras require showing that actions of nonamenable groups are not isomorphic. For example, in Bowen's first paper on the subject, he was able to use fundamental work of Popa in \cite{PopaStrongRigidity},\cite{PopaSG}, to show that if $\Gamma$ is an infinite conjugacy class, property (T), sofic group (e.g. $PSL_{n}(\ZZ)$ for $n\geq 3$) and $(X,\mu),(Y,\nu)$ are standard probability spaces with $H(X,\mu)\ne H(Y,\nu),$ then $L^{\infty}((X,\mu)^{\Gamma}\rtimes\Gamma\not\cong L^{\infty}((Y,\nu)^{\Gamma})\rtimes \Gamma$ (here $L^{\infty}((X,\mu)^{\Gamma})\rtimes \Gamma$ is the von Neumann algebra crossed product: a natural von Neumann algebra associated to any probability measure-preserving action). Bowen also gave similar applications to orbit equivalence rigidity. The use of sofic entropy is completely unavoidable for this result: it is known that if $\Gamma$ is a sofic group which contains $\ZZ$ as a subgroup, then sofic entropy is a \emph{complete} invariant for isomorphisms of Bernoulli shifts. Thus one cannot deduce nonisomorphism of crossed product von Neumann algebras or failure of orbit equivalence for Bernoulli actions of such groups without using sofic entropy. Of course one cannot prove such rigidity for actions of amenable groups as the crossed product von Neumann algebras they produce are always the same, by Connes' Theorem. Similar remarks apply to orbit equivalence by work of Ornstein-Weiss and Connes-Feldman-Weiss (see \cite{OrnWeiss}, \cite{CFW}).
	
	In \cite{Me6}, we expanded on connections to orbit equivalence theory. To summarize the results we need some terminology. If $\Gamma\actson (X,\mu)$ is a probability measure-preserving action, a \emph{factor} of the action is another probability measure-preserving action $\Gamma\actson (Y,\nu)$ so that there is an almost everywhere $\Gamma$-equivariant, measurable map $\pi\colon X\to Y$ so that $\pi_{*}\mu=\nu.$ We call $\pi$ a factor map. We sometimes say that $(X,\mu)\to (Y,\nu)$ is an extension and if we wish to specify the group we will say that $\Gamma\actson (X,\mu)\to \Gamma\actson (Y,\nu)$ is an extension. An action of a sofic group is said to have completely positive entropy if every nontrivial (i.e. not a one-point space) factor  has positive entropy (see \cite{KerrCPE},\cite{BurCPE} for examples of completely positive entropy actions of sofic groups). Lastly, a probability measure-preserving action $\Gamma\actson (X,\mu)$ is said to be strongly ergodic if for every sequence $A_{n}$ of measurable subsets of $X$ with $\mu(g A_{n}\Delta A_{n})\to 0$ for all $g\in\Gamma$ we have $\mu(A_{n})(1-\mu(A_{n}))\to 0.$ In Corollary 1.2 of \cite{Me6} we showed that every probability measure-preserving action of a nonamenable, sofic group with positive entropy is strongly ergodic. We remark that strong ergodicity is an invariant of the orbit equivalence class of the action. Thus, a particular consequence of our results is that if a probability measure-preserving action of a nonamenable group is not strongly ergodic, then no action orbit equivalent to it has completely positive entropy. This results stands in stark contrast to the celebrated fact that all ergodic, probability measure-preserving actions of amenable groups are orbit equivalent. The applications sofic entropy has to von Neumann algebra and orbit equivalence rigidity makes it clear that generalizing entropy to the nonamenable realm is a useful endeavor, as such rigidity phenomena \emph{never} occurs for actions of amenable groups.
	
	In this note, we expand on some of the results in \cite{Me6}. Because our results only apply for completely positive entropy actions, and there are few known examples of such actions, we wish to generalize our results in \cite{Me6} so that they give structural properties for arbitrary actions. In the amenable case it is well known how to do this: given any probability measure-preserving action of an amenable group there is a maximal factor, called the Pinsker factor, which has entropy zero. We can thus say that any action has completely positive entropy relative to its Pinsker factor and much of what is known for completely positive entropy actions is known for a general action ``relative to the Pinsker factor.'' For example, any action of an amenable group is mixing relative to its Pinsker factor. One can prove in the sofic case that there is a unique largest entropy zero factor of any action. We can also call this the \emph{Pinsker factor}. However, because entropy can decrease under factors this factor does not, in our opinion, have the right monotonicity properties and so we wish to also investigate another generalization of the Pinsker factor, called the \emph{Outer Pinsker factor}.
	
	Motivating the definition of the Outer Pinsker factor is a way to fix the ``pathological'' behavior that entropy can increase under factor maps. Implicit in an alternate formulation of entropy due to Kerr in \cite{KerrPartition}, given a countable, discrete, sofic group $\Gamma,$ a probability measure-preserving action $\Gamma\actson (X,\mu)$ and a factor $\Gamma\actson (Y,\nu)$ of $\Gamma\actson (X,\mu)$ we can define the ``extension entropy'' $h_{\mu}(Y:X,\Gamma).$ The extension entropy measures how many ``finitary simulations'' of $\Gamma\actson (Y,\nu)$ there are which ``lift'' to ``finitary simulations'' of $\Gamma\actson (X,\mu).$ It is easy to see that this has the right monotonicity properties: $h(Y:X,\Gamma)$ is decreasing under factors of $Y$ if we $X$ fixed, it is increasing under intermediate factors between $X$ and $Y$ if we keep $Y$ fixed, and it is subadditive under joins of factors in the first variable (it can be shown by methods analogous to \cite{KLi2} that $h_{(\sigma_{i})_{i},\mu}(Y:X,\Gamma)=h_{(\sigma_{i})_{i},\nu}(Y,\Gamma)$ if $\Gamma$ is amenable). Because of these monotonicity properties there is a canonical maximal factor $\Gamma\actson (Y,\nu)$, called the \emph{Outer Pinsker Factor}, so that the extension entropy $h(Y:X,\Gamma)$ is zero. We do not claim any originality on the definition of the Outer Pinsker factor, as its definition is quite natural (and appears to be folklore) and through private communication we know it has been observed at least  by L.Bowen, Kerr, Seward (each independent of the other). The goal of this note is merely to extend what representation-theoretic properties we know for completely positive entropy actions to spectral properties of an \emph{arbitrary} probability measure-preserving action relative to the Pinsker and Outer Pinsker factors. In the appendix, we observe that when $\Gamma$ is amenable we have $h_{\mu}(Y:X,\Gamma)=h_{\nu}(Y,\Gamma).$ The proof is essentially a combination of \cite{BowenAmen},\cite{KLi2} and Corollary 5.2 of \cite{PoppArg} (which was first proved under the assumption that the action is free in Proposition 1.20 of \cite{LPaun}).
	
	For entropy for single actions, instead of extension entropy, to deduced the desired spectral properties from assumptions of positive entropy it was enough to use just the unitary representation theory of the group. It turns out that in order to deduce our desired results for extensions
	\[\Gamma\actson (X,\mu)\to \Gamma\actson (Y,\nu),\]
we will need to know how both how $\Gamma$ and how $Y$ (or more precisely $L^{\infty}(Y)$) ``acts'' on $X.$ The right way to do this is to replace unitary representations of $\Gamma$ with $*$-representations of the algebraic crossed product: $L^{\infty}(Y)\rtimes_{\textnormal{alg}}\Gamma.$ Recall that the algebraic crossed product is the algebra of all finite formal sums:
\[\sum_{g\in\Gamma}f_{g}u_{g},\mbox{ $f_{g}\in L^{\infty}(Y)$ and all but finitely many $f_{g}$ are zero},\]
with the imposed relation
\[u_{g}f=(f\circ g^{-1})u_{g},\mbox{ $g\in\Gamma,f\in L^{\infty}(Y)$.}\]
Defining
\[\left(\sum_{g\in\Gamma}f_{g}u_{g}\right)^{*}=\sum_{g\in\Gamma}(\overline{f_{g}}\circ g)u_{g},\]
the algebraic crossed product becomes a $*$-algebra.  If
\[\pi\colon X\to Y\]
is a factor map, we have a $*$-representation $\rho$ of $L^{\infty}(Y)\rtimes_{\textnormal{alg}}\Gamma$ on $L^{2}(X)$ given by:
\begin{align*}
&(\rho(u_{g})\xi)(x)=\xi(g^{-1}x),\mbox{ for $g\in\Gamma$, $\xi\in L^{2}(X,\mu)$,}\\
&(\rho(f)\xi)(x)=f(\pi(x))\xi(x),\mbox{ for $f\in L^{\infty}(Y,\nu)$, $\xi\in L^{2}(X,\mu).$}
\end{align*}
In order to properly formulate our generalization of Theorem 1.1 in \cite{Me6}, we will need a $*$-representation of $L^{\infty}(Y,\nu)\rtimes_{\textnormal{alg}}\Gamma$ which may be regarded as an analogue of the left regular representation of a group. Von Neumann algebra theory provides us with the correct analogue: consider the $*$-representation $\lambda\colon L^{\infty}(Y,\nu)\rtimes_{\textnormal{alg}}\Gamma\to B(L^{2}(Y,\nu,\ell^{2}(\Gamma)))$ uniquely determined by
\begin{align*}
&(\lambda(f)\xi)(y)=f(y)\xi(y)\mbox{ for $y\in Y,$ $f\in L^{\infty}(Y),\xi\in L^{2}(Y,\nu,\ell^{2}(\Gamma)$},\\
&(\lambda(u_{g})\xi)(y)(h)=\xi(g^{-1}y)(g^{-1}h)\mbox{ for $y\in Y,g,h\in\Gamma$}.
\end{align*}
We will see that this is the correct analogue of the left regular representation. We note this $*$-representation of $L^{\infty}(Y)\rtimes_{\textnormal{alg}}\Gamma$ is precisely the one obtained from the action of the von Neumann algebra crossed product $L^{\infty}(Y)\rtimes \Gamma$ on its $L^{2}$-space. Recall that if $\rho_{j}\colon A\to B(\mathcal{H}_{j}),j=1,2$ are two $*$-representations of a $*$-algebra on Hilbert spaces $\mathcal{H}_{j},j=1,2$ then $\rho_{1},\rho_{2}$ are \emph{singular} written $\rho_{1}\perp\rho_{2}$ if and only if no subrepresentation of $\rho_{1}$ is embeddable into $\rho_{2}.$ Suppose we are given probability measure-preserving actions $\Gamma\actson (X,\mu),\Gamma\actson (Y,\nu),\Gamma\actson (Z,\zeta)$  of $\Gamma$ and  factor maps $\pi\colon X\to Y,\rho\colon X\to Z.$ We say that another factor $\Gamma\actson (Z,\zeta)$ is an intermediate factor between $X$ and $Y$ if there is a factor map $\phi\colon Z\to Y$ so that $\pi=\phi\circ \rho.$  We say that a set $\mathcal{F}$ of measurable functions $X\to\CC$ generates $Z$ if the smallest, complete, $\Gamma$-invariant sigma algebra of sets containing
\[\{f^{-1}(A):f\in\mathcal{F},A\subseteq \CC\mbox{ is Borel}\}\]
is
\[\{\rho^{-1}(E):E\subseteq Z\mbox{ is $\zeta$-measurable}\}.\]
Note that since we take complete sigma-algebras, this does not depend upon the elements of $\mathcal{F}$ mod null sets, so we can make sense of what it means for a subset of $L^{2}(X,\mu)$ to generate a factor. We are now ready to state the following analogue of the main theorem of \cite{Me6} for extension entropy.

\begin{theorem}\label{T:IntroMain} Let $\Gamma$ be a countable, discrete, sofic group with sofic approximation $\sigma_{i}\colon\Gamma\to S_{d_{i}}.$ Let $\Gamma\actson (X,\mu)$ be a measure-preserving action of $\Gamma$ with $(X,\mu)$ a standard probability space. Let $\Gamma\actson (Y,\nu)$ be a factor of $\Gamma\actson (X,\mu)$ and let $\Gamma\actson (Z,\zeta)$ be a intermediate factor in between $X$ and $Y.$ Suppose there exists a $L^{\infty}(Y)\rtimes_{\textnormal{alg}}\Gamma$-subrepresentation $\mathcal{H}$ of $L^{2}(X,\mu)$ which generates $Z$ and so that $\mathcal{H}$ is singular with respect to $L^{2}(Y,\nu,\ell^{2}(\Gamma))$ as a representation of $L^{\infty}(Y,\nu)\rtimes_{\textnormal{alg}}\Gamma.$ Then,
\[h_{(\sigma_{i})_{i},\mu}(Z:X,\Gamma)=h_{(\sigma_{i})_{i},\mu}(Y:X,\Gamma).\]
In particular,
\[h_{(\sigma_{i})_{i},\zeta}(Z,\Gamma)\leq h_{(\sigma_{i})_{i},\nu}(Y,\Gamma).\]

\end{theorem}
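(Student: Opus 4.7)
My plan is to mimic the representation-theoretic strategy of Theorem~1.1 of \cite{Me6}, replacing unitary representations of $\Gamma$ throughout by $*$-representations of $L^{\infty}(Y)\rtimes_{\textnormal{alg}}\Gamma$, and replacing the left regular representation of $\Gamma$ by the standard representation $\lambda$ on $L^{2}(Y,\nu,\ell^{2}(\Gamma))$ described above. One direction of the main equality is immediate from the monotonicity of extension entropy noted in the introduction: since $Y$ is a factor of $Z$ (with second slot $X$ fixed), we have
\[
h_{(\sigma_{i})_{i},\mu}(Y:X,\Gamma)\leq h_{(\sigma_{i})_{i},\mu}(Z:X,\Gamma),
\]
so the substance is the reverse inequality. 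Once the main equality is in hand, the ``in particular'' clause follows by applying the equality \emph{with $X$ replaced by $Z$}: the generating subspace $\mathcal{H}$ in fact lies in $L^{2}(Z,\zeta)\subseteq L^{2}(X,\mu)$ since its elements are $Z$-measurable (because $\mathcal{H}$ generates $Z$), and the singularity hypothesis is unchanged, so
\[
h_{(\sigma_{i})_{i},\zeta}(Z,\Gamma)=h_{(\sigma_{i})_{i},\zeta}(Z:Z,\Gamma)=h_{(\sigma_{i})_{i},\zeta}(Y:Z,\Gamma)\leq h_{(\sigma_{i})_{i},\nu}(Y,\Gamma),
\]
the last inequality coming from the ``source-shrinking'' monotonicity of extension entropy.

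For the remaining inequality $h_{(\sigma_{i})_{i},\mu}(Z:X,\Gamma)\leq h_{(\sigma_{i})_{i},\mu}(Y:X,\Gamma)$, I would use Kerr's operator-algebraic description of extension entropy: $h_{(\sigma_{i})_{i},\mu}(W:X,\Gamma)$ is the exponential growth rate in $d_{i}$ of the cardinality of a suitably separated net of ``$(F,\delta)$-compatible'' finitary models of $L^{\infty}(W)$ which extend to finitary models of $L^{\infty}(X)$. Because $\mathcal{H}$ generates $Z$, any finite collection of functions in $L^{\infty}(Z)$ can be $L^{2}$-approximated by polynomials in $L^{\infty}(Y)$ applied to finitely many vectors $\xi_{1},\dots,\xi_{n}\in\mathcal{H}$. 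So, modulo the tolerance in the definition, a $Z$-microstate is determined by a $Y$-microstate together with a tuple $(\eta_{1},\dots,\eta_{n})\in\ell^{2}(d_{i})^{n}$ that approximately models $(\xi_{1},\dots,\xi_{n})$ as an element of the cyclic $L^{\infty}(Y)\rtimes_{\textnormal{alg}}\Gamma$-submodule of $\mathcal{H}$ generated by the $\xi_{j}$'s, with respect to the finite-dimensional representation on $\ell^{2}(d_{i})$ whose $L^{\infty}(Y)$-action is supplied by the $Y$-microstate and whose $\Gamma$-action is supplied by $\sigma_{i}$. The problem thereby reduces to showing that, uniformly in the $Y$-microstate, the number of such tuples (up to some $\varepsilon$ in $\ell^{2}$) grows subexponentially in $d_{i}$.

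The core of the proof is this subexponential bound, which is where the singularity hypothesis enters. The finitary representation on $\ell^{2}(d_{i})$ described above is, in the sofic limit, an approximate sub-representation of $\lambda$ on $L^{2}(Y,\nu,\ell^{2}(\Gamma))$; this is essentially the identification of the sofic construction with a compression of the standard form of the crossed product. Because $\mathcal{H}\perp L^{2}(Y,\nu,\ell^{2}(\Gamma))$ as representations of $L^{\infty}(Y)\rtimes_{\textnormal{alg}}\Gamma$, a compactness/ultraproduct argument gives that any sequence of candidate tuples $(\eta_{1}^{(i)},\dots,\eta_{n}^{(i)})$ whose approximate moments converge to the true $L^{\infty}(Y)\rtimes_{\textnormal{alg}}\Gamma$-moments of $(\xi_{1},\dots,\xi_{n})$ would produce, in an ultralimit, a nonzero intertwiner from the cyclic $L^{\infty}(Y)\rtimes_{\textnormal{alg}}\Gamma$-submodule of $\mathcal{H}$ generated by the $\xi_{j}$'s into $\lambda$, contradicting singularity. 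This qualitative statement is then quantified into a covering bound by the same concentration/projection argument as in \cite{Me6}: one approximates the $\eta_{j}$'s by their images under a well-chosen finite-rank projection built from $\lambda$-coefficients, and bounds the covering number of the resulting set by a dimension count that, under singularity, is $o(d_{i})$.

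The main technical obstacle is precisely this last step — the finitary singularity estimate for representations of $L^{\infty}(Y)\rtimes_{\textnormal{alg}}\Gamma$. In \cite{Me6} the corresponding estimate concerned unitary representations of $\Gamma$, in which case the ``finitary regular representation'' is the canonical permutation representation on $\ell^{2}(d_{i})$, independent of any microstate. Here the finitary standard form on $\ell^{2}(d_{i})$ depends on the chosen $Y$-microstate (it supplies the $L^{\infty}(Y)$-action), so the covering bound has to be proved with a rate $o(d_{i})$ that is uniform in this choice. Establishing this uniformity — so that the subexponential correction attaches to the $Y$-microstate count without inflating into an exponential factor — is the delicate part, and is essentially an equicontinuous version of the singularity-to-dimension argument; this is where most of the technical work of the proof will live.
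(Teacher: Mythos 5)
Your outline follows the same route as the paper at every structural point: the easy inequality by monotonicity, the ``in particular'' clause by taking $X=Z$ (and indeed $\mathcal{H}\subseteq L^{2}(Z,\zeta)$), and the main inequality via a covering bound in which singularity produces a low-dimensional subspace of $\ell^{2}(d_{i})$ capturing the finitary models of the generating vectors. However, the proposal has two genuine gaps. First, the passage from $\mathcal{H}$ to finitely many vectors is not just ``modulo the tolerance in the definition'': $L^{2}$-approximation of functions in $L^{\infty}(Z)$ by polynomials in $L^{\infty}(Y)$ applied to $\xi_{1},\dots,\xi_{n}$ does not by itself control the extension entropy of an observable, and one needs an upper-semicontinuity statement for $h_{(\sigma_{i})_{i},\mu}(\,\cdot\,:X,\Gamma)$ along an increasing sequence of intermediate factors. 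The paper handles this by reducing to the case of a \emph{single} generator $\xi$: it builds the tower $Z_{n}$ generated by $L^{\infty}(Y)$ and $\xi_{1},\dots,\xi_{n}$, checks that the cyclic $L^{\infty}(Z_{n-1})\rtimes_{\textnormal{alg}}\Gamma$-module generated by $\xi_{n}$ is still singular with respect to $L^{2}(Z_{n-1},\ell^{2}(\Gamma))$, and invokes Lemma 7.9 of \cite{Me6} to pass to the limit. Some such lemma is indispensable in your sketch as well.

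Second, and more seriously, the step you explicitly defer (``where most of the technical work of the proof will live'') is the actual content of the theorem, and the uniformity problem you raise is resolved in the paper not by an ``equicontinuous version of the singularity-to-dimension argument'' but by pushing all of the singularity into one fixed algebraic element \emph{before} any microstate is chosen. By Proposition 4.2 of \cite{Me6}, singularity yields for each $\eta>0$ an element $f=\sum_{g\in E}f_{g}u_{g}\in C(Y)\rtimes_{\textnormal{alg}}\Gamma$ with $\|\rho(f)\|\leq 1$, $\tau_{\nu}(f^{*}f)<\eta^{2}$, and $\|\rho(f^{*}f)\xi-\xi\|_{2}<\eta$. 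For a microstate $\phi$ one forms $(\phi_{Y}\rtimes\sigma_{i})(f)$ and the spectral projection $p_{\phi}$ of $(\phi_{Y}\rtimes\sigma_{i})(f)^{*}(\phi_{Y}\rtimes\sigma_{i})(f)$ near $1$; the microstate condition forces the normalized trace of this operator to be close to $\tau_{\nu}(f^{*}f)$, so $\tr(p_{\phi})=O(\eta^{2})$ automatically and uniformly, because $f$ has finitely many continuous coefficients evaluated along the microstate. The remaining uniformity (that nearby $Y$-microstates give nearby operators $(\phi_{Y}\rtimes\sigma_{i})(f)$) is obtained by covering the $Y$-microstates in the $\Delta_{Y,\infty}$ metric, which in turn requires the auxiliary proposition that the $\infty$-metric version of $h_{(\sigma_{i})_{i},\mu}(\Delta_{Y}:\Delta_{X})$ agrees with the $2$-metric version. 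Without this packaging, your ``finite-rank projection built from $\lambda$-coefficients'' and the $o(d_{i})$ dimension count remain heuristics rather than a proof.
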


As in \cite{Me6}, the following description of  $L^{2}(X,\mu)$ as a representation of its Pinsker factor crossed product is automatic. Suppose $\pi\colon X\to Y$ is a factor map. Then we can view $L^{2}(Y)$ as a subspace of $L^{2}(X)$ via the embedding $f\mapsto f\circ \pi$ for $f\in L^{2}(Y).$

\begin{cor}\label{C:IntroRepstuff} Let $\Gamma$ be a countable, discrete, sofic group with sofic approximation $\sigma_{i}\colon\Gamma\to S_{d_{i}}.$ Let $\Gamma\actson (X,\mu)$ be an arbitrary measure-preserving action where $(X,\mu)$ is a standard probability space. Let $\Gamma\actson (Y_{0},\nu_{0}),\Gamma\actson (Y,\nu)$ be the Outer Pinsker factor and Pinsker factor of $\Gamma\actson (X,\mu)$ respectively. Then, as a representation of $L^{\infty}(Y)\rtimes_{\textnormal{alg}}\Gamma,$ we have that $L^{2}(X)\ominus L^{2}(Y)$ embeds into $L^{2}(Y,\ell^{2}(\Gamma))^{\oplus \infty}.$ Similarly, as a representation of $L^{\infty}(Y_{0})\rtimes_{\textnormal{alg}}\Gamma,$ we have that $L^{2}(X)\ominus L^{2}(Y)$ embeds into $L^{2}(Y_{0},\ell^{2}(\Gamma))^{\oplus \infty}.$

\end{cor}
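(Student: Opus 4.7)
The plan is to combine Theorem~\ref{T:IntroMain} with a Lebesgue-type decomposition of representations of the crossed product. Every $*$-representation of $L^{\infty}(Y)\rtimes_{\textnormal{alg}}\Gamma$ extends to a normal representation of the von Neumann algebra crossed product $L^{\infty}(Y)\rtimes\Gamma$, and standard Hilbert-module theory yields a canonical orthogonal decomposition $\mathcal{K}=\mathcal{K}_{ac}\oplus\mathcal{K}_{s}$ for any such representation, where $\mathcal{K}_{ac}$ embeds into $L^{2}(Y,\ell^{2}(\Gamma))^{\oplus\infty}$ and $\mathcal{K}_{s}$ is singular with respect to $L^{2}(Y,\ell^{2}(\Gamma))$. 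I apply this to $\mathcal{K}=L^{2}(X)\ominus L^{2}(Y)$ as an $L^{\infty}(Y)\rtimes_{\textnormal{alg}}\Gamma$-module (and analogously to $L^{2}(X)\ominus L^{2}(Y_{0})$ as an $L^{\infty}(Y_{0})\rtimes_{\textnormal{alg}}\Gamma$-module), reducing the task to showing $\mathcal{K}_{s}=0$.

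Suppose for contradiction $\mathcal{K}_{s}\neq 0$. Because $\mathcal{K}_{s}$ is closed under the $L^{\infty}(Y)$-action, for any $\xi\in\mathcal{K}_{s}$ and any measurable $A\subseteq Y$ we have $1_{A}\xi\in\mathcal{K}_{s}$; hence the sigma-algebra on $X$ generated by the functions in $\mathcal{K}_{s}$ already contains sets of the form $\pi^{-1}(A)\cap\{\xi\neq 0\}$. Combining this over a countable dense family in $\mathcal{K}_{s}$ and using the $\Gamma$-invariance of the support of $\mathcal{K}_{s}$ (and, if its $Y$-saturated support is proper, passing to that $\Gamma$-invariant subsystem), one shows that $\mathcal{K}_{s}$ generates an intermediate factor $Y\leq Z\leq X$ in the sense required by Theorem~\ref{T:IntroMain}. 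The theorem, applied with $\mathcal{H}=\mathcal{K}_{s}$, then yields $h_{(\sigma_{i})_{i},\mu}(Z:X,\Gamma)=h_{(\sigma_{i})_{i},\mu}(Y:X,\Gamma)$ together with $h_{(\sigma_{i})_{i},\zeta}(Z,\Gamma)\leq h_{(\sigma_{i})_{i},\nu}(Y,\Gamma)$.

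When $Y=Y_{0}$ is the Outer Pinsker factor, $h(Y_{0}:X)=0$ by definition, so $h(Z:X)=0$ and maximality of $Y_{0}$ forces $Z\leq Y_{0}$, giving $Z=Y_{0}$. When $Y$ is the Pinsker factor, $h_{\nu}(Y)=0$ forces $h_{\zeta}(Z)=0$, and Pinsker maximality gives $Z=Y$. In either case, every element of $\mathcal{K}_{s}$ is measurable with respect to the sigma-algebra of the relevant Pinsker factor, so $\mathcal{K}_{s}$ is contained in $L^{2}$ of that factor; this is incompatible with $\mathcal{K}_{s}$ being orthogonal to it unless $\mathcal{K}_{s}=0$, contradicting the assumption. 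The main obstacle is the generation step: carefully verifying that $\mathcal{K}_{s}$ alone (rather than $\mathcal{K}_{s}$ together with an explicit copy of $L^{\infty}(Y)$) satisfies the generation hypothesis of Theorem~\ref{T:IntroMain}, which requires the support and $L^{\infty}(Y)$-module argument outlined above.
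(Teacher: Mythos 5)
Your proposal is correct and follows essentially the same route as the paper: decompose $L^{2}(X)\ominus L^{2}(Y)$ into an absolutely continuous part and a singular part (the paper invokes Proposition 4.3 of \cite{Me6} for exactly the decomposition you describe), apply Theorem \ref{T:IntroMain} to the factor generated by the singular part over $Y$, and use maximality of the Pinsker/Outer Pinsker factor to force that factor to equal $Y$ and hence the singular part to vanish. The generation issue you flag is handled in the paper's body version of the theorem (Theorem \ref{T:extensionnonsense}), where $Z$ is by definition the factor generated by $\mathcal{H}\cup L^{\infty}(Y)$, so no separate support argument is needed; the paper also works one cyclic vector $\xi\in\mathcal{H}_{s}$ at a time rather than with all of $\mathcal{K}_{s}$ at once, but this is immaterial.
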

	
Since this formulation in terms of algebraic crossed product is somewhat abstract and far from the ergodic theoretic roots of sofic entropy, we mention a purely ergodic theory corollary of Theorem \ref{T:IntroMain}. We say that an extension
\[\Gamma\actson (X,\mu)\to \Gamma\actson (Y,\nu)\]
is mixing if for all $\xi,\eta\in L^{\infty}(X)$ with $\EE_{Y}(\xi)=0=\EE_{Y}(\eta)$ we have
\[\lim_{g\to\infty}\|\EE_{Y}((\xi\circ g^{-1})\eta)\|_{L^{2}(Y)}=0.\]
Here $\EE_{Y}(f)$ is the conditional expectation of $f\in L^{1}(X,\mu)$ onto $Y.$ The extension is said to have spectral gap if for every sequence $\xi_{n}\in L^{2}(X)$ with
\[\|\xi_{n}\circ g^{-1}-\xi_{n}\|_{2}\to_{n\to\infty}0\mbox{ for all $g\in\Gamma$,}\]
we have
\[\|\xi_{n}-\EE_{Y}(\xi_{n})\|_{2}\to 0.\]
To make sense of $\xi_{n}-\EE_{Y}(\xi_{n})$ we are using the embedding of $L^{2}(Y)$ into $L^{2}(X)$ defined before via the factor map.

\begin{cor} Let $\Gamma$ be a countable, discrete, sofic group with sofic approximation $\sigma_{i}\colon\Gamma\to S_{d_{i}}.$ Let $\Gamma\actson (X,\mu)$ be an arbitrary measure-preserving action where $(X,\mu)$ is a standard probability space. Let $\Gamma\actson (Y_{0},\nu_{0}),\Gamma\actson (Y,\nu)$ be the Pinsker factor and Outer Pinsker factor of $\Gamma\actson (X,\mu)$ respectively.

(i): If $\Gamma$ is infinite, then $\Gamma\actson (X,\mu)$ is mixing relative to $\Gamma\actson (Y_{0},\nu_{0}).$ In particular, $\Gamma\actson (X,\mu)$ is mixing relative to $\Gamma\actson (Y,\nu).$

(ii) If $\Lambda$ is any nonamenable subgroup of $\Gamma,$ then $\Lambda\actson (X,\mu)$ has spectral gap relative to $\Lambda\actson (Y,\nu)$ and $\Gamma\actson (Y_{0},\nu_{0}).$ In particular, $\Gamma\actson (X,\mu)$ is strongly ergodic relative to $\Gamma\actson (Y_{0},\nu_{0})$ and $\Gamma\actson (X,\mu)$ is strongly ergodic relative to $\Gamma\actson (Y,\nu).$

\end{cor}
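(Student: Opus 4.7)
The plan is to reduce both parts to questions about the ``regular'' representation $L^{2}(Y_{0},\ell^{2}(\Gamma))$ of $L^{\infty}(Y_{0})\rtimes_{\textnormal{alg}}\Gamma$ (and analogously for $Y$), via the module embedding $T\colon L^{2}(X)\ominus L^{2}(Y_{0})\hookrightarrow L^{2}(Y_{0},\ell^{2}(\Gamma))^{\oplus\infty}$ supplied by Corollary \ref{C:IntroRepstuff} (and similarly with $Y_{0}$ replaced by $Y$). The key observation is that any such isometric $L^{\infty}(Y_{0})\rtimes_{\textnormal{alg}}\Gamma$-module map automatically preserves the natural $L^{1}(Y_{0})$-valued inner product $\ip{\alpha,\beta}_{L^{1}(Y_{0})}$, which on $L^{2}(X)$ is $\EE_{Y_{0}}(\alpha\overline{\beta})$ and on $L^{2}(Y_{0},\ell^{2}(\Gamma))$ is the pointwise $\ell^{2}(\Gamma)$-inner product $y\mapsto\ip{\alpha(y),\beta(y)}_{\ell^{2}(\Gamma)}$; both satisfy $\ip{\rho(f)\alpha,\beta}=\int_{Y_{0}}f\ip{\alpha,\beta}_{L^{1}(Y_{0})}\,d\nu_{0}$ for $f\in L^{\infty}(Y_{0})$, so the module condition pins them down. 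In particular, for $\xi\in L^{\infty}(X)\cap(L^{2}(X)\ominus L^{2}(Y_{0}))$ one has $\ip{T\xi,T\xi}_{L^{1}(Y_{0})}=\EE_{Y_{0}}(|\xi|^{2})\in L^{\infty}(Y_{0})$ with $L^{\infty}$-norm at most $\|\xi\|_{\infty}^{2}$, a uniform boundedness that will be crucial below.

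For part (i), given $\xi,\eta\in L^{\infty}(X)$ with $\EE_{Y_{0}}\xi=\EE_{Y_{0}}\eta=0$, I rewrite
\[\EE_{Y_{0}}((\xi\circ g^{-1})\eta)=\ip{\rho(u_{g})\xi,\overline{\eta}}_{L^{1}(Y_{0})}=\ip{\lambda(u_{g})T\xi,T\overline{\eta}}_{L^{1}(Y_{0})},\]
reducing to showing $\|\ip{\lambda(u_{g})\alpha,\beta}_{L^{1}(Y_{0})}\|_{L^{2}(Y_{0})}\to 0$ as $g\to\infty$ whenever $\alpha,\beta\in L^{2}(Y_{0},\ell^{2}(\Gamma))^{\oplus\infty}$ satisfy $\ip{\alpha,\alpha}_{L^{1}(Y_{0})},\ip{\beta,\beta}_{L^{1}(Y_{0})}\in L^{\infty}(Y_{0})$. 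On elementary tensors $\alpha=f\otimes\delta_{k}$, $\beta=f'\otimes\delta_{k'}$, a direct computation from the formula for $\lambda$ yields $\ip{\lambda(u_{g})\alpha,\beta}_{L^{1}(Y_{0})}=\delta_{gk,k'}(f\circ g^{-1})\overline{f'}$, which vanishes identically whenever $g\ne k'k^{-1}$. Approximating $\alpha,\beta$ by elements finitely supported in the $\ell^{2}(\Gamma)$-coordinate, the tail is controlled in $L^{2}(Y_{0})$-norm by combining pointwise Cauchy--Schwarz on $\ip{\cdot,\cdot}_{L^{1}(Y_{0})}$ with the covariance $\ip{\lambda(u_{g})\zeta,\lambda(u_{g})\zeta}_{L^{1}(Y_{0})}(y)=\ip{\zeta,\zeta}_{L^{1}(Y_{0})}(g^{-1}y)$ and the uniform $L^{\infty}(Y_{0})$-boundedness above. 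The hard part is precisely this $L^{2}(Y_{0})$-upgrade: pointwise Cauchy--Schwarz alone only supplies $L^{1}(Y_{0})$-decay of the coefficient, and promoting to $L^{2}(Y_{0})$ requires crucially that $\xi,\eta\in L^{\infty}(X)$, which forces $\ip{T\xi,T\xi}_{L^{1}(Y_{0})}$ into $L^{\infty}(Y_{0})$. The identical argument with $Y$ in place of $Y_{0}$ gives mixing relative to $Y$.

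For part (ii), restrict the embedding to $\Lambda$. As a unitary $\Lambda$-representation, $L^{2}(Y_{0},\ell^{2}(\Gamma))\cong L^{2}(Y_{0})\otimes\ell^{2}(\Gamma)$ carries the diagonal tensor product of the Koopman representation on $L^{2}(Y_{0})$ and the restriction $\lambda_{\Gamma}|_{\Lambda}$. Decomposing $\ell^{2}(\Gamma)|_{\Lambda}=\bigoplus_{c\in\Lambda\backslash\Gamma}\ell^{2}(\Lambda c)$, each summand is isomorphic as a $\Lambda$-representation to the left regular representation $\lambda_{\Lambda}$, so Fell's absorption principle identifies the $\Lambda$-representation $L^{2}(Y_{0},\ell^{2}(\Gamma))$ with an infinite multiple of $\lambda_{\Lambda}$. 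Since $\Lambda$ is nonamenable, $\lambda_{\Lambda}$ has no nonzero almost invariant vectors (with a uniform spectral constant depending only on $\Lambda$), and this property passes to arbitrary direct sums and subrepresentations; hence $L^{2}(X)\ominus L^{2}(Y_{0})$ has no nonzero $\Lambda$-almost invariant vectors. If $\xi_{n}\in L^{2}(X)$ satisfies $\|\xi_{n}\circ g^{-1}-\xi_{n}\|_{2}\to 0$ for all $g\in\Lambda$, then $\Gamma$-equivariance of $\EE_{Y_{0}}$ shows $\xi_{n}-\EE_{Y_{0}}(\xi_{n})\in L^{2}(X)\ominus L^{2}(Y_{0})$ is still $\Lambda$-almost invariant, hence $\|\xi_{n}-\EE_{Y_{0}}(\xi_{n})\|_{2}\to 0$, which is spectral gap relative to $Y_{0}$. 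The case of $Y$ is identical, and strong ergodicity of $\Gamma\actson(X,\mu)$ relative to either Pinsker factor follows by applying spectral gap to $\xi_{n}=1_{A_{n}}$ for a $\Gamma$-almost invariant sequence $(A_{n})$.
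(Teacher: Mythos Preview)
Your proposal is correct and follows essentially the same route as the paper. In part (i), the paper invokes direct integral theory to decompose the isometric embedding $U=\int_{Y}^{\oplus}U_{y}\,d\nu(y)$ and from this reads off the identity $\ip{\lambda_{Y}(g)U(f),U(h)}_{Y}=\EE_{Y}(\alpha_{X}(g)(f)h^{*})$; you obtain the same identity more directly from the $L^{\infty}(Y_{0})$-equivariance of $T$ (testing against all $f\in L^{\infty}(Y_{0})$ to pin down the $L^{1}(Y_{0})$-valued inner product), which is a clean way to package the same content without disintegration language. Your observation that projecting $T\xi$ onto finitely many $\ell^{2}(\Gamma)$-coordinates preserves the bound $\ip{\alpha_{0},\alpha_{0}}_{L^{1}(Y_{0})}\leq\ip{T\xi,T\xi}_{L^{1}(Y_{0})}\in L^{\infty}(Y_{0})$ is exactly what makes the $L^{2}(Y_{0})$-approximation go through, and the paper uses this implicitly when it reduces to $\xi,\eta\in L^{\infty}(Y,\ell^{2}(\Gamma))$ with $\|\xi\|_{\infty},\|\eta\|_{\infty}\leq 1$. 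Part (ii) is the same argument in both: Fell absorption plus the coset decomposition $\ell^{2}(\Gamma)\big|_{\Lambda}\cong\bigoplus\lambda_{\Lambda}$, followed by nonamenability of $\Lambda$.
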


We remark that there is another approach to entropy for actions of nonamenable groups called Rokhlin entropy, first investigated by Seward in \cite{SewardKrieger}. Rokhlin entropy is easy to define and is defined for actions of arbitrary groups, but it is extremely hard to compute. There are no known instances where one can show that an action has positive Rokhlin entropy without knowing it has positive sofic entropy. In every case where the Rokhlin entropy has been computed and it is positive the computation has been done by first computing the sofic entropy, then using the general fact that sofic entropy is a lower bound for Rokhlin entropy, and finally showing (by methods that varies from case to case) that the sofic entropy is an upper bound for the Rokhlin entropy. Thus, in our opinion, there has yet to be a satisfactory, explicit computation of Rokhlin entropy which does not go through computing sofic entropy. In an analogous manner one can define the Rokhlin Pinsker factor and the outer Rokhlin Pinsker factor for a probability measure-preserving action of an arbitrary group. Alpeev in \cite{AALP} showed that any probability measure-preserving action is weakly mixing over its Rokhlin Pinsker factor. It is appears to be unknown if any probability measure-preserving action is in fact mixing over its Rokhlin  Pinsker factor. It is also unknown if the conclusion of Corollary \ref{C:IntroRepstuff} holds over the Rokhlin Pinsker factor, or if any action of a nonamenable group is strongly ergodic over its Rokhlin Pinsker factor. It appears to be very difficult to deduce any spectral properties of actions from positivity of Rokhlin entropy.

\section{Proof of The Main Theorem}

We start with the definition of a sofic group. For $n\in\NN,$ we use $u_{n}$ for the uniform measure on $\{1,\dots,n\}.$

\begin{definition}\emph{Let $\Gamma$ be a countable, discrete group. A} sofic approximation \emph{of $\Gamma$ is a sequence of functions $\sigma_{i}\colon\Gamma\to S_{d_{i}}$ (not assumed to be homomorphisms) so that}
\begin{align*}
&u_{d_{i}}(\{1\leq j\leq d_{i}:\sigma_{i}(gh)(j)=\sigma_{i}(g)\sigma_{i}(h)(j)\})\to 1\mbox{ \emph{for all $g,h\in\Gamma$},}\\
&u_{d_{i}}(\{1\leq j\leq d_{i}:\sigma_{i}(g)(j)\ne j\})\to 1\mbox{ \emph{for all $g\in\Gamma.$}}
\end{align*}
\emph{We say that $\Gamma$ is} sofic \emph{if it has a sofic approximation.}

\end{definition}

Intuitively, the first condition of a sofic approximation says that we have an ``almost actions'' of $\Gamma$ on the finite set $\{1,\dots,d_{i}\}$ and the second condition of a sofic approximation says that this action is ``almost free.'' Since finite groups can be characterized as those groups which act freely on finite sets we may view soficity as the analogue of finiteness one obtains by replacing the exact algebra with approximate algebra.  We now turn to some preliminaries needed for the definition of entropy and extension entropy. It will be important in this paper that we can reduce the computation of entropy (and extension entropy) to generating observables.

\begin{definition}\emph{Let $(X,\mathcal{M},\mu)$ be a standard probability space. Let $\mathcal{S}$ be a subalgebra of $\mathcal{M}$ (here $\mathcal{S}$ is not necessarily a $\sigma$-algebra).} A finite $\mathcal{S}$-measurable observable \emph{is a measurable map $\alpha\colon X\to A$ where $A$ is a finite set and $\alpha^{-1}(\{a\})\in \mathcal{S}$ for all $a\in A.$ If $\mathcal{S}=\mathcal{M},$ we simply call $\alpha$} a finite observable. \emph{Another finite $\mathcal{S}$-measurable observable $\beta\colon X\to B$ is said to} refine \emph{$\alpha,$ written $\alpha\leq \beta,$ if there is a  $\omega\colon B\to A$ so that $\omega(\beta(x))=\alpha(x)$ for almost every $x\in X.$ If $\Gamma$ is a countable discrete group and $\Gamma\actson(X,\mathcal{M},\mu)$ by measure-preserving transformations we say that $\mathcal{S}$ is generating if $\mathcal{M}$ is the $\sigma$-algebra generated by $\{gA:A\in\mathcal{S}\}$ (up to sets of measure zero).}
\end{definition}

Suppose we are given a standard probability space $(X,\mu),$ and a countable discrete group $\Gamma$ with $\Gamma\actson (X,\mu)$ by measure-preserving transformations. Given a finite observable $\alpha\colon X\to A,$ and a finite $F\subseteq\Gamma$ we let $\alpha^{F}\colon X\to A^{F}$ be defined by
\[\alpha^{F}(x)(g)=\alpha(g^{-1}x).\]

\begin{definition}\emph{Let $\Gamma$ be a countable discrete group and $\sigma\in S_{d}^{\Gamma}$ for some $d\in\NN.$ Let $(X,\mathcal{M},\mu)$ be a standard probability space and let $\mathcal{S}\subseteq\mathcal{M}$ be a subalgebra. Let $\alpha\colon X\to A$ be a finite $\mathcal{S}$ measurable-observable. Given $F\subseteq\Gamma$ finite, and $\delta>0,$ we let $\AP(\alpha,F,\delta,\sigma)$ be all $\phi\colon\{1,\dots,d\}\to A^{F}$ so that}
\[\sum_{a\in A^{F}}\left|u_{d_{i}}(\phi^{-1}(\{a\}))-\mu((\alpha^{F})^{-1}(\{a\}))\right|<\delta.\]
\[u_{d_{i}}(\{1\leq j\leq d_{i}:\phi(j)(g)=\phi(\sigma_{i}(g)^{-1}(j))(e)\})<\delta\mbox{\emph{ for all $g\in F.$}}\]

\end{definition}

We can now define extension entropy. The following definition was given by Kerr in \cite{KerrPartition} and is a natural generalization of Bowen's original definition of measure entropy in \cite{Bow}. For notation, if $f\colon B\to A$ and $C\subseteq B^{X}$ for some set $X,$ we let $f\circ C=\{f\circ \phi:\phi\in C\}.$

\begin{definition}\emph{Let $\Gamma$ be a countable discrete sofic group with sofic approximation $\Sigma=(\sigma_{i}\colon\Gamma\to S_{d_{i}}).$ Let $(X,\mathcal{M},\mu)$ be a standard probability space and $\Gamma\actson (X,\mathcal{M},\mu)$ by measure-preserving transformations. Let $\mathcal{S},\mathcal{T}$ be  subalgebras of $\mathcal{M}.$ Assume that $\mathcal{S}\subseteq\mathcal{T}.$  Let $\alpha\colon X\to A$ be a finite $\mathcal{S}$-measurable observable and let $\beta\colon X\to B$ be a $\mathcal{T}$-measurable observable refining $\alpha$ and $\omega\colon B\to A$ as in the definition of $\alpha\leq\beta.$ For a finite $F\subseteq \Gamma,$ we define}
\[\widetidle{\omega}\colon B^{F}\to A\]
 \emph{by}
 \[\widetilde{\omega}(b)=\omega(b(e)).\]
 \emph{We set}
\[h_{\Sigma,\mu}(\alpha:\beta,F,\delta)=\limsup_{i\to\infty}\frac{1}{d_{i}}\log\left|\widetilde{\omega}\circ (\AP(\alpha,F,\delta,\sigma_{i}))\right|\]
\[h_{\Sigma,\mu}(\alpha:\beta,\Gamma)=\inf_{\substack{ F\subseteq\Gamma \textnormal{ finite},\\ \delta>0}}h_{\Sigma,\mu}(\alpha:\beta,F,\delta).\]
\emph{We then set}
\[h_{\Sigma,\mu}(\alpha:\mathcal{T},\Gamma)=\inf_{\alpha\leq \beta}h_{\Sigma,\mu}(\alpha;\beta,\Gamma)\]
\[h_{\Sigma,\mu}(\mathcal{S}:\mathcal{T},\Gamma)=\sup_{\alpha}h_{\Sigma,\mu}(\alpha:\mathcal{S})\]
\emph{where the last infimum  is over all $\mathcal{T}$-measurable observables, and the supremum is over all $\mathcal{S}$-measurable observables. }
\end{definition}
It is known that
\[h_{\Sigma,\mu}(\mathcal{S}:\mathcal{T},\Gamma)\]
only depends upon the $\Gamma$-invariant sigma-algebra of sets generated by $\mathcal{S},\mathcal{T}.$ It is known that if $\mathcal{S}$ is a complete $\Gamma$-invariant subsigma-algebra of $\mathcal{M},$ then there is factor map $\pi\colon (X,\mu)\to (Y,\nu)$ so that
\[\mathcal{S}=\{\pi^{-1}(A):A\subseteq Y \mbox{ is $\nu$-measurable}\}.\]
Conversely, if we are given a factor map $\pi\colon (X,\mu)\to (Y,\nu)$ then
\[\{\pi^{-1}(A):A\subseteq Y\mbox{ is $\nu$-measurable}\}\]
is a $\Gamma$-invariant subsgima-algebra. Because of this, we will frequently blur the lines between observables, $\Gamma$-invariant subsigma-algebras and factors. Thus if $\mathcal{A}$ is  $\Gamma$-invariant sigma-algebra of measurable set in $X,$ and $Y$ is the factor generated by this algebra, we shall use
\[h_{(\sigma_{i})_{i},\mu}(Y:X,\Gamma)\]
for
\[h_{(\sigma_{i})_{i},\mu}(\mathcal{A}:\mathcal{M},\Gamma)\]
where $\mathcal{M}$ is the measurable subsets of $X.$ We will call $h_{(\sigma_{i})_{i},\mu}(Y:X,\Gamma)$ the entropy of $Y$ in the presence of $X$ (with respect to $(\sigma_{i})_{i}$) or the extension entropy of the extension
\[\Gamma\actson (X,\mu)\to \Gamma\actson (Y,\nu).\]
By \cite{KerrPartition} we have
\[h_{(\sigma_{i})_{i},\mu}(X:X,\Gamma)=h_{(\sigma_{i}),\mu}(X,\Gamma).\]

	As in \cite{Me6}, we need to use a way to compute the entropy of $Y$ in the presence of $X$ using topological models for
\[X\to Y.\]
For this, we recall some terminology from \cite{Me6}. Let $X$ be a Polish space and $\Gamma\actson X$ an action of a countable discrete group $\Gamma$ by homeomorphisms. We say that a continuous pseudometric $\Delta$ is dynamically generating if for every open subset $U$ of $X$ and every $x\in U,$ there is a $\delta>0$ and a finite $F\subseteq \Gamma$ so that
\[\bigcap_{g\in F}\{y\in X:\Delta(gx,gy)<\delta\}\subseteq U.\]
We note here that our definition of dynamically generating contains the assumption that $\Delta$ is continuous. Let $(A,\Delta)$ be a pseudometric space. For subsets $C,B$ of $A,$ and $\varepsilon>0$ we say that $C$ is $\varepsilon$-contained in $B$ and write $C\subseteq_{\varepsilon}B$ if for all $c\in C,$ there is a $b\in B$ so that $\Delta(c,b)<\varepsilon.$ We say that $S\subseteq A$ is $\varepsilon$-dense if $A\subseteq_{\varepsilon}S.$ We use $S_{\varepsilon}(A,\Delta)$ for the smallest cardinality of a $\varepsilon$-dense subset of $A.$ If $C\subseteq_{\delta}B$ are subsets of $A,$ then
\[S_{2(\varepsilon+\delta)}(C,\Delta)\leq S_{\varepsilon}(B,\Delta).\]
We say  that $N\subseteq A$ is $\varepsilon$-separated if for every $n_{1}\ne n_{2}$ in $N$ we have $\Delta(n_{1},n_{2})>\varepsilon.$ We use $N_{\varepsilon}(A,\Delta)$ for the smallest cardinality of a $\varepsilon$-separated subset of $A.$ Note that
\begin{equation}\label{E:separationspanning}
N_{2\varepsilon}(A,\Delta)\leq S_{\varepsilon}(A,\Delta)\leq N_{\varepsilon}(A,\Delta),
\end{equation}
and that if $A\subseteq B,$ then
\[N_{\varepsilon}(A,\Delta)\leq N_{\varepsilon}(B,\Delta).\]

\begin{definition}\emph{ Let $\Gamma$ be a countable discrete group and $X$ a Polish space with $\Gamma\actson X$ by homeomorphisms. Let $\Delta$ be a bounded pseudometric on $X.$ For a function $\sigma\colon \Gamma\to S_{d},$ for some $d\in \NN,$ a finite $F\subseteq \Gamma,$ and a $\delta>0$ we let $\Map(\Delta,F,\delta,\sigma)$ be all functions $\phi\colon \{1,\dots,d\}\to X$ so that}
\[\max_{g\in F}\Delta_{2}(\phi\circ \sigma(g),g\phi)<\delta.\]
\end{definition}
Given a  Polish space $X$, a finite $L\subseteq C_{b}(X),$ a $\delta>0,$ and $\mu\in \Prob(X)$ let
\[U_{L,\delta}(\mu)=\bigcap_{f\in L}\left\{\nu\in\Prob(X):\left|\int f\,d\nu-\int f\,d\mu\right|<\delta\right\}.\]
Then $U_{L,\delta}(\mu)$ form a basis of neighborhoods of $\mu$ for the weak topology. Here $C_{b}(X)$ is  the space of bounded continuous functions on $X.$
\begin{definition}\emph{Suppose that $\mu$ is a $\Gamma$-invariant Borel probability measure on $X.$ For $F\subseteq\Gamma$ finite, $\delta>0$ and $L\subseteq C_{b}(X)$ finite, and $\sigma\colon\Gamma\to S_{d}$ for some $d\in\NN$  we let $\Map_{\mu}(\Delta,F,\delta,L,\sigma)$ be the set of all $\phi\in\Map(\Delta,F,\delta,\sigma)$ so that}
\[\phi_{*}(u_{d})\in U_{L,\delta}(\mu).\]
\emph{for all $f\in L.$}\end{definition}

Recall that if $X,Y$ are Polish spaces a continuous, surjective map $\pi\colon X\to Y$ is a \mbox{quotient map} if  $\{E\subseteq X:\pi^{-1}(E)\mbox{ is open}\}$ equals the set of open subsets of $X.$

\begin{definition}\emph{Let $\Gamma$ be a countable discrete sofic group with sofic approximation $\sigma_{i}\colon\Gamma\to S_{d_{i}}.$ Let $X$ and $Y$ be Polish spaces with $\Gamma\actson X,\Gamma\actson Y$ by homeomorphisms. Suppose that there  exists a $\Gamma$-equivariant quotient map $\pi\colon X\to Y$. Let $\mu,\nu$ be $\Gamma$-invariant Borel probability measures on $X,Y$ with $\pi_{*}\mu=\nu.$ Let $\Delta_{X},\Delta_{Y}$ be bounded, dynamically generating pseudometrics for $X,Y.$ Inductively define}
\[h_{(\sigma_{i})_{i},\mu}(\Delta_{Y}:\Delta_{X},\varepsilon,F,\delta,L)=\limsup_{i\to\infty}\frac{1}{d_{i}}\log N_{\varepsilon}(\pi^{d_{i}}(\Map_{\mu}(\Delta_{X},F,\delta,L,\sigma_{i})),\Delta_{Y})\]
\[h_{(\sigma_{i})_{i},\mu}(\Delta_{Y}:\Delta_{X},\varepsilon)=\inf_{\substack{\textnormal{ finite} F\subseteq\Gamma,\\ \delta>0,\\ \textnormal{ finite} L\subseteq C_{b}(X)}}h_{(\sigma_{i})_{i},\mu}(\Delta_{Y}:\Delta_{X},\varepsilon,F,\delta,L)\]
\[h_{(\sigma_{i})_{i},\mu}(\Delta_{Y}:\Delta_{X})=\sup_{\varepsilon>0}h_{(\sigma_{i})_{i},\mu}(\Delta_{Y}:\Delta_{X},\varepsilon).\]

\end{definition}

We wish to prove that
\[h_{(\sigma_{i})_{i},\mu}(\Delta_{Y}:\Delta_{X})=h_{(\sigma)_{i},\mu}(Y:X,\Gamma),\]
for any choice of dynamically generating pseudometric $\Delta_{Y},\Delta_{X}.$ Much of the proof follows that of Theorem 3.12 in \cite{Me6}.  The following Lemma follows exactly as in Lemma 3.9 of \cite{Me6}. For notation, if $X$ is a Polish space and $\Gamma\actson X$ we let $\Prob_{\Gamma}(X)$ be the set of $\Gamma$-invariant, Borel probability measures on $X.$

\begin{lemma}\label{L:siwchingspsm} Let $\Gamma$ be a countable, discrete, sofic group with sofic approximation $\sigma_{i}\colon\Gamma\to S_{d_{i}}$. Let $X,Y$ be Polish spaces with $\Gamma\actson X,\Gamma\actson Y$ by homeomorphisms. Suppose there exists a topological factor map $\pi\colon X\to Y$ and $\mu\in \Prob_{\Gamma}(X),\nu\in \Prob_{\Gamma}(Y)$ with $\pi_{*}\mu=\nu.$ For any pair of dynamically generating pseudometrics $\Delta_{Y},\Delta_{X}$ on $Y,X,$ we can find compatible metrics $\Delta_{Y}',\Delta_{X}'$ so that
\[h_{(\sigma_{i})_{i},\mu}(\Delta_{Y}':\Delta_{X}')=h_{(\sigma_{i})_{i},\mu}(\Delta_{Y}:\Delta_{X}).\]

\end{lemma}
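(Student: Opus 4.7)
The plan is to construct the desired compatible metrics by simply adding bounded compatible metrics to the given pseudometrics, and then invoke invariance of entropy under change of dynamically generating pseudometric. Since $X$ and $Y$ are Polish, fix bounded compatible metrics $\rho_X$ on $X$ and $\rho_Y$ on $Y$ with $\rho_X,\rho_Y\leq 1$, and set
\[\Delta_X' = \Delta_X + \rho_X, \qquad \Delta_Y' = \Delta_Y + \rho_Y.\]
Each $\rho$ alone induces the correct topology and separates points, while each $\Delta$ is continuous, so $\Delta_X'$ and $\Delta_Y'$ are bounded continuous metrics inducing the Polish topologies on $X$ and $Y$.

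The crux is the following general principle: if $\Delta, \Delta'$ are any two bounded dynamically generating pseudometrics on a Polish $\Gamma$-space $Z$, then for every compact $K\subseteq Z$ and every $\varepsilon>0$ there exist a finite $F\subseteq \Gamma$ and $\delta>0$ so that whenever $x\in K$, $y\in Z$ and $\max_{g\in F}\Delta(gx,gy)<\delta$, one has $\Delta'(x,y)<\varepsilon$. This is obtained by applying the dynamical generating condition of $\Delta$ at each $x\in K$ to the $\Delta'$-open set $\{y:\Delta'(x,y)<\varepsilon\}$, extracting a finite subcover from the resulting open cover of $K$, and taking the union of the resulting $F$'s and the minimum of the $\delta$'s.

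With this principle in hand one shows $h_{(\sigma_i)_i,\mu}(\Delta_Y':\Delta_X')=h_{(\sigma_i)_i,\mu}(\Delta_Y:\Delta_X)$ by converting microstates and separated sets between the two choices, done in two steps (first change the $Y$-pseudometric, then the $X$-pseudometric). For any $\eta>0$, choose compact sets $K_X\subseteq X$, $K_Y\subseteq Y$ with $\mu(K_X),\nu(K_Y)>1-\eta$; by enlarging the finite set $L\subseteq C_b(X)$ used in the definition of $\Map_\mu$ to include bump functions vanishing on $K_X$, one forces any microstate $\phi\in\Map_\mu(\Delta_X,F,\delta,L,\sigma_i)$ to have $\phi_*(u_{d_i})$-mass at least $1-2\eta$ on $K_X$. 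Applying the uniform compatibility principle to the pairs $(\Delta_X,\Delta_X')$ and $(\Delta_Y,\Delta_Y')$, one modifies $\phi$ on a set of coordinates of density $O(\eta)$ to produce a microstate for the other pseudometric with essentially the same $\pi^{d_i}$-pushforward; combined with a Hamming-type comparison of $\Delta_Y$- and $\Delta_Y'$-separated counts, this yields matching exponential growth rates in both directions.

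The main obstacle, and the reason the bookkeeping is delicate, is that microstates $\phi\colon\{1,\dots,d_i\}\to X$ live in $X^{d_i}$ with no a priori compactness, so ensuring that a modification on an $O(\eta)$-fraction of coordinates introduces only a subexponential factor in the separated-set counts requires a careful volume estimate on the number of $\Delta_Y'$-small perturbations of a given element of $Y^{d_i}$. The estimate is controlled by covering numbers of $K_Y$ by small $\Delta_Y'$-balls and becomes negligible as $\eta\to 0$. This is precisely the argument executed in the proof of Lemma~3.9 of \cite{Me6}, and since the definitions of $\Map_\mu$, $N_\varepsilon$, and dynamically generating pseudometric in the present paper coincide with those used there, the argument transfers verbatim.
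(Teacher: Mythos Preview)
Your proposal is correct and takes essentially the same approach as the paper: the paper itself gives no argument and simply states that the lemma ``follows exactly as in Lemma~3.9 of \cite{Me6},'' and your sketch (add a bounded compatible metric, use the uniform-compatibility-on-compacta principle for dynamically generating pseudometrics, and control microstates off a compact set of large measure) is precisely an outline of that referenced argument. Your final sentence already acknowledges this, so there is nothing to add.
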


We need the following Lemma, which gives us a canonical way of producing microstates for a factor.

\begin{lemma}\label{L:asdlghalsdhn} Let $\Gamma$ be  a countable, discrete, sofic group with sofic approximation $\sigma_{i}\colon\Gamma\to S_{d_{i}}.$ Let $X,Y$ be Polish spaces with $\Gamma\actson X,\Gamma\actson Y$ by homeomorphisms. Suppose there exists a topological factor map $\pi\colon X\to Y$ and $\mu\in \Prob_{\Gamma}(X),\Prob_{\Gamma}(Y)$ with $\pi_{*}\mu=\nu.$ Fix dynamically generating pseudometrics $\Delta_{Y},\Delta_{X}$ on $Y,X.$ Then, for any finite $F\subseteq\Gamma,L\subseteq C_{b}(Y)$ and $\delta>0,$ there exists finite $F'\subseteq \Gamma,L'\subseteq C_{b}(X)$ and $\delta'>0$ so that for all sufficiently large $i,$
\[\pi\circ \Map_{\mu}(\Delta_{X},F,',\delta',L',\sigma_{i})\subseteq \Map_{\nu}(\Delta_{Y},F,\delta,L,\sigma_{i}).\]

\end{lemma}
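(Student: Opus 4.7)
By Lemma~\ref{L:siwchingspsm} we may replace $\Delta_X,\Delta_Y$ by compatible bounded metrics; let $D>0$ be an upper bound for $\Delta_Y$. The pushforward measure condition is routine: for any $f\in L\subseteq C_b(Y)$, $f\circ\pi\in C_b(X)$, and since $\nu=\pi_*\mu$ we have $\int f\,d((\pi\phi)_*u_{d_i})=\int (f\circ\pi)\,d(\phi_*u_{d_i})$ and $\int f\,d\nu=\int (f\circ\pi)\,d\mu$. Hence including $\{f\circ\pi:f\in L\}$ in $L'$ and requiring $\delta'\le\delta$ forces $(\pi\phi)_*u_{d_i}\in U_{L,\delta}(\nu)$.

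The content lies in the equivariance condition. Take $F'=F$ and, for each $g\in F$, set $a_j=\phi(\sigma_i(g)(j))$, $b_j=g\phi(j)$; the goal is $\frac{1}{d_i}\sum_j \Delta_Y(\pi a_j,\pi b_j)^2<\delta^2$. Fix small $\eta,\varepsilon>0$ to be tuned. By inner regularity of $\mu$, pick compact $K\subseteq X$ with $\mu(K)>1-\eta$. Since $\pi$ is continuous and $K$ is compact, a Lebesgue-number argument yields $r>0$ such that whenever $y\in K$ and $\Delta_X(y,z)<r$ one has $\Delta_Y(\pi y,\pi z)<\varepsilon/2$. Let $V=\{x\in X:\dist_{\Delta_X}(x,K)<r/2\}$, an open set containing $K$; the triangle inequality then shows that $x\in V$ and $\Delta_X(x,z)<r/2$ imply $\Delta_Y(\pi x,\pi z)<\varepsilon$. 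By Urysohn's lemma there is $f_0\in C_b(X)$ with $1_K\le f_0\le 1_V$; add it to $L'$ and set $\delta'=\min\{\delta,\eta,r\sqrt{\eta}/2\}$.

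For $\phi\in\Map_\mu(\Delta_X,F,\delta',L',\sigma_i)$, the closeness condition on $f_0$ gives $\phi_*u_{d_i}(V)\ge\int f_0\,d\mu-\delta'\ge\mu(K)-\delta'>1-2\eta$. Since $\sigma_i(g)\in S_{d_i}$ is a bijection, $|\{j:a_j\notin V\}|=d_i\cdot\phi_*u_{d_i}(X\setminus V)<2\eta d_i$, and Markov's inequality applied to $\Delta_{X,2}(\phi\sigma_i(g),g\phi)<\delta'$ gives $|\{j:\Delta_X(a_j,b_j)\ge r/2\}|\le 4(\delta')^2d_i/r^2\le\eta d_i$. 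Off the union of these two exceptional sets (of total size at most $3\eta d_i$) both $a_j\in V$ and $\Delta_X(a_j,b_j)<r/2$, so $\Delta_Y(\pi a_j,\pi b_j)<\varepsilon$. Summing,
\[
\frac{1}{d_i}\sum_j \Delta_Y(\pi a_j,\pi b_j)^2 \le \varepsilon^2 + 3\eta D^2,
\]
which is below $\delta^2$ once $\varepsilon=\delta/2$ and $\eta<\delta^2/(4D^2)$. The main obstacle is that in a non-locally-compact Polish space one cannot directly bound $\phi_*u_{d_i}(X\setminus K)$ from above using only weak-$*$ closeness to $\mu$, since weak convergence controls open-set measures only from below. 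The Urysohn function circumvents this by bounding $\phi_*u_{d_i}(V)$ from below instead of $\phi_*u_{d_i}(K)$, and the Lebesgue-number construction extends uniform continuity of $\pi|_K$ to the tubular neighborhood $V$ that the Urysohn function actually controls.
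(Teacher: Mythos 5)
Your argument for the measure condition and the Urysohn/Markov bookkeeping is correct, and the overall shape (Prokhorov compact set $K$, uniform continuity of $\pi$ near $K$, Markov's inequality to control the exceptional indices) is the same as the paper's. Two genuine differences: you control $\phi_*u_{d_i}$ off $K$ with an explicit Urysohn function rather than invoking Lemma 3.10 of \cite{Me6} (a fine, self-contained substitute), and you take $F'=F$, whereas the paper takes $F'\supseteq EF$ for an auxiliary finite set $E\subseteq\Gamma$.

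That second difference is where there is a gap. Your Lebesgue-number step (``$y\in K$ and $\Delta_X(y,z)<r$ implies $\Delta_Y(\pi y,\pi z)<\varepsilon/2$'') requires $\Delta_X$-smallness to imply topological closeness, which is false for a general dynamically generating \emph{pseudo}metric: $\Delta_X$ may not separate points at all, and $\Delta_X(y_n,z_n)\to 0$ with $y_n\to y$ need not force $z_n\to y$. You attempt to dispose of this by citing Lemma \ref{L:siwchingspsm} to ``replace'' $\Delta_X,\Delta_Y$ by compatible metrics, but that lemma only asserts equality of the entropy quantities $h_{(\sigma_i)_i,\mu}(\Delta_Y':\Delta_X')=h_{(\sigma_i)_i,\mu}(\Delta_Y:\Delta_X)$; the statement you are proving is an exact containment of specific microstate spaces for the \emph{given} pseudometrics, and transferring such a containment between pseudometrics requires a separate inclusion of the form $\Map_\mu(\Delta_X,F',\delta',L',\sigma_i)\subseteq\Map_\mu(\Delta_X',F'',\delta'',L'',\sigma_i)$, which Lemma \ref{L:siwchingspsm} does not provide. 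The paper avoids the reduction entirely: it uses dynamical generation directly to produce $\eta>0$ and a finite $E\subseteq\Gamma$ with the property that $x,y\in K$ and $\max_{h\in E}\Delta_X(hx,hy)<\eta$ force $\Delta_Y(\pi x,\pi y)<\delta$, and correspondingly enlarges $F'$ to contain $EF$ (paying a soficity error to compare $\sigma_i(hg)$ with $\sigma_i(h)\sigma_i(g)$, which is why its conclusion holds only for large $i$). Your proof is complete as written only when $\Delta_X$ is a compatible metric; to cover the general case you must either carry out the microstate-containment version of the pseudometric switch or run the compactness argument with the auxiliary set $E$ as the paper does.
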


\begin{proof}  Fix finite $F\subseteq \Gamma,L\subseteq C_{b}(X)$ and $\delta>0.$ Let $M$ be the diameter of $\Delta_{Y}.$ By Prokhorov's theorem, we may find a compact $K\subseteq X$ so that $\mu(X\setminus K)<\delta.$ Choose $\eta>0$  and a finite $E\subseteq \Gamma$ so that if $x,y\in K$ and
\[\max_{h\in E}\Delta_{X}(hx,hy)<\eta,\]
then $\Delta_{Y}(\pi(x),\pi(y))<\delta.$ By Lemma 3.10 in \cite{Me6}, we may find a $F_{0}'\subseteq\Gamma, L_{0}'\subseteq \Gamma$ finite and $\delta_{0}'>0$ so that
\[\phi_{*}(u_{d_{i}})(X\setminus K)\leq 2\delta,\mbox{ for all $\phi\in\Map_{\mu}(\Delta_{X},F_{0}',\delta_{0}',L_{0}',\sigma_{i}).$ }\]
Let $\delta'>0$ depend upon $\delta,\eta$ in manner to be determined later, we will at least assume that $\delta'<\min(\delta,\delta_{0}').$ Set
\[L'=L_{0}'\cup\{f\circ \pi:f\in L\},\]
\[F'=F_{0}'\cup EF.\]
Now suppose that $\phi\in\Map_{\mu}(\Delta_{X},F',\delta',L',\sigma_{i}),$ as $\phi_{*}(u_{d_{i}})\in U_{L',\delta'}(\mu),$ we have $(\pi\circ \phi)_{*}(u_{d_{i}})\in U_{L,\delta}(\nu).$ Fix $g\in F$ and let
\[C=\bigcap_{h\in E}\{1\leq j\leq d_{i}:\Delta_{X}(hg\phi(j),h\phi(\sigma_{i}(g)(j))<\eta\},\]
\[D=\phi^{-1}(X\setminus K).\]
For all $h\in E,$ we have
\begin{align*}
\Delta_{X,2}(hg\phi,h\phi\circ \sigma_{i}(g))&\leq \Delta_{X,2}(hg\phi,\phi\circ \sigma_{i}(hg))+\Delta_{X,2}(\phi\circ\sigma_{i}(hg),\phi\circ\sigma_{i}(h)\sigma_{i}(g))\\
&+\Delta_{X,2}(h\phi\circ\sigma_{i}(g),\phi\circ\sigma_{i}(h)\sigma_{i}(g))\\
&=\Delta_{X,2}(hg\phi,\phi\circ \sigma_{i}(hg))+\Delta_{X,2}(\phi\circ\sigma_{i}(hg),\phi\circ\sigma_{i}(h)\sigma_{i}(g))+\Delta_{X,2}(h\phi,\phi\circ\sigma_{i}(h))\\
&\leq 2\delta'+Mu_{d_{i}}(\{j:\sigma_{i}(hg)(j)\ne \sigma_{i}(h)\sigma_{i}(g)(j)\}).
\end{align*}
By soficity we have
\[\Delta_{X,2}(hg\phi,h\phi\circ \sigma_{i}(g))\leq 3\delta',\]
if $i$ is sufficiently large. Thus for all sufficiently large $i,$ we have
\[u_{d_{i}}(D^{c}\cup C^{c})\leq 2\delta+9\left(\frac{\delta'}{\eta}\right)^{2}|E|.\]
We may choose $\delta'$ sufficiently small so that
\[u_{d_{i}}(D^{c}\cup C^{c})\leq 3\delta.\]
We then have for all sufficiently large $i,$
\[\Delta_{Y,2}(g\pi\circ \phi,\pi\circ \phi\circ \sigma_{i}(g))^{2}\leq 3\delta M+\delta^{2}.\]
So  $\pi\circ \phi\in \Map_{\nu}(\Delta_{Y},F,L,(3\delta M+\delta^{2})^{1/2},\sigma_{i}),$ for all sufficiently large $i.$

\end{proof}

Before we prove that extension entropy can be expressed via dynamically generating pseudometrics, we need some more notation. If we are given $\sigma\in S_{d}^{\Gamma},$ and $\phi\in A^{d}$ we define, for a finite $F\subseteq \Gamma,$
\[\phi^{F}_{\sigma}\colon\{1,\cdots,d\}\to A^{F}\]
by
\[(\phi^{F}_{\sigma})(j)(g)=\phi(\sigma(g)^{-1}j).\]
Given a Polish space $X$ and $\mu\in \Prob_{\Gamma}(X),$ we let $\CO_{\mu}$ the set of all Borel subsets of $X$ so that
\[\mu(\Int E)=\mu(\overline{E}).\]

\begin{theorem}\label{T':topmodyo} Let $\Gamma$ be a countable discrete sofic group with sofic approximation $\sigma_{i}\colon\Gamma\to S_{d_{i}}.$ Let  $X$ and $Y$ be Polish spaces with $\Gamma\actson X,\Gamma\actson Y$ by homeomorphisms. Suppose that there exists a $\Gamma$-equivariant quotient map $\pi\colon X\to Y$. Let $\mu,\nu$ be $\Gamma$-invariant Borel probability measures on $X,Y$ with $\pi_{*}\mu=\nu.$ Let $\Delta_{X},\Delta_{Y}$ be  bounded, dynamically generating pseudometrics for $X,Y.$ Then
\[h_{(\sigma_{i})_{i},\mu}(\Delta_{Y}:\Delta_{X},\Gamma)=h_{(\sigma_{i})_{i},\mu}(Y:X,\Gamma).\]

\end{theorem}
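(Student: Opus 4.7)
The plan is to adapt the proof of Theorem 3.12 in \cite{Me6} to the extension-entropy setting, establishing both inequalities separately. After invoking Lemma \ref{L:siwchingspsm} to replace $\Delta_X,\Delta_Y$ by compatible metrics (removing technical issues caused by zero-distance points), and combining with Lemma \ref{L:asdlghalsdhn} to guarantee that microstates for $X$ automatically project to microstates for $Y$, the argument splits naturally.

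For the inequality $h_{(\sigma_i)_i,\mu}(\Delta_Y:\Delta_X)\leq h_{(\sigma_i)_i,\mu}(Y:X,\Gamma)$, fix $\varepsilon>0$. Using separability of $Y$ and regularity of $\nu$, I would construct a finite Borel partition $\{P_1,\dots,P_n\}$ of $Y$ with each $P_k\in\CO_\nu$ and $\Delta_Y$-diameter smaller than $\varepsilon/4$. The associated observable $\alpha_0:Y\to\{1,\dots,n\}$ pulls back to the $\mathcal{A}$-measurable observable $\alpha=\alpha_0\circ\pi$ on $X$, where $\mathcal{A}$ is the pullback $\sigma$-algebra from $Y$. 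Given any $\mathcal{M}$-measurable refinement $\beta:X\to B$ of $\alpha$ with refinement map $\omega:B\to A$, the $\CO_\mu$-structure of the level sets of $\beta$ allows one to enlarge $F,L$ and shrink $\delta$ so that the discretization $j\mapsto \beta^F(\phi(j))$ of any $\phi\in\Map_\mu(\Delta_X,F,\delta,L,\sigma_i)$ lies in $\AP(\beta,F',\delta',\sigma_i)$ for the desired parameters (one approximates $\mathbf{1}_{\beta^{-1}(\{b\})}$ by a continuous function and controls the empirical measure via $L$). An $\varepsilon$-separation in $\Delta_{Y,2}$ of two image microstates $\pi\circ\phi_1,\pi\circ\phi_2$ forces a positive-density set of indices $j$ with $\Delta_Y(\pi\phi_1(j),\pi\phi_2(j))>\varepsilon/2>2\cdot(\varepsilon/4)$, hence with $\alpha_0(\pi\phi_1(j))\neq\alpha_0(\pi\phi_2(j))$; this yields distinct values of $\widetilde{\omega}\circ(\beta^F\circ\phi_k)$ and the desired inequality after taking infima in $\beta$ and suprema in $\alpha$.

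For the reverse inequality $h_{(\sigma_i)_i,\mu}(Y:X,\Gamma)\leq h_{(\sigma_i)_i,\mu}(\Delta_Y:\Delta_X)$, fix a finite $\mathcal{A}$-measurable $\alpha:X\to A$ (so $\alpha=\alpha_0\circ\pi$). I would produce a $\mathcal{M}$-measurable refinement $\beta:X\to B$ with level sets in $\CO_\mu$ of small $\Delta_X$-diameter, and then select for each $b\in B$ a representative $x_b\in\beta^{-1}(\{b\})$, coordinated so that the points $\pi(x_b)$ across distinct $\alpha_0$-cells are bounded away in $\Delta_Y$ by some constant $\eta(\alpha)>0$. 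Each $\psi\in\AP(\beta,F,\delta,\sigma_i)$ then yields $\phi_\psi:\{1,\dots,d_i\}\to X$ defined by $\phi_\psi(j)=x_{\psi(j)(e)}$, which, after enlarging $F$ and shrinking $\delta$, lies in $\Map_\mu(\Delta_X,F_0,\delta_0,L_0,\sigma_i)$ by the $\CO_\mu$-approximation combined with the dynamically generating property of $\Delta_X$. Distinct elements of $\widetilde{\omega}\circ\AP(\beta,F,\delta,\sigma_i)$ produce images $\pi\circ\phi_\psi$ that are $\eta(\alpha)$-separated in $\Delta_{Y,2}$ on a positive-density set of coordinates; counting and taking appropriate suprema/infima yields the bound.

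The hard part will be the reverse inequality, specifically the coordinated choice of representatives $x_b$ which simultaneously (i) respects the refinement map $\omega$ so that $\alpha_0\circ\pi(x_b)=\omega(b)$, (ii) lies in a $\CO_\mu$-refinement of $\beta^{-1}(\{b\})$ so that $\phi_\psi$ is an honest microstate, and (iii) has the $\pi(x_b)$'s well-separated in $\Delta_Y$ across distinct $\alpha_0$-cells. Carrying this out without losing exponential counts requires carefully balancing $\CO_\mu$-approximations against the dynamically generating property of $\Delta_X$ (and $\Delta_Y$, to guarantee the separation in (iii)), essentially as in Theorem 3.12 of \cite{Me6}, but with the added bookkeeping to ensure every step is compatible with the factor map $\pi:X\to Y$.
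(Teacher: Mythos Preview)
Your approach to the first inequality $h_{(\sigma_i)_i,\mu}(\Delta_Y:\Delta_X)\leq h_{(\sigma_i)_i,\mu}(Y:X,\Gamma)$ is essentially the same as the paper's, modulo one correctable slip: a general Polish space $Y$ need not admit a finite Borel partition into $\CO_\nu$-sets of small $\Delta_Y$-diameter (think of $Y=\RR$). The paper handles this by invoking Prokhorov's theorem to find a compact $K\subseteq Y$ of measure $>1-\varepsilon$, covering $K$ by finitely many small $\CO_\nu$-balls, and carrying the complement $E=Y\setminus\bigcup B_{\Delta_Y}(y_j,\delta_j)$ as an additional (possibly large-diameter) cell of small measure. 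The indices $j$ with $\pi(\phi(j))\in E$ then contribute a controlled error term. With this modification your contrapositive argument (same discretization implies close in $\Delta_{Y,2}$) is equivalent to the paper's.

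Your proposed reverse inequality, however, has a genuine gap. You claim that distinct elements of $\widetilde\omega\circ\AP(\beta,F,\delta,\sigma_i)$ yield images $\pi\circ\phi_\psi$ that are $\eta(\alpha)$-separated in $\Delta_{Y,2}$ ``on a positive-density set of coordinates''. But two distinct elements of $\widetilde\omega\circ\AP$ can differ at a \emph{single} index $j\in\{1,\dots,d_i\}$, so the corresponding $\pi\circ\phi_{\psi_1},\pi\circ\phi_{\psi_2}$ can be as close as $O(1/\sqrt{d_i})$ in $\Delta_{Y,2}$. Hence the map $\psi\mapsto\pi\circ\phi_\psi$ is far from injective at scale $\varepsilon$, and you cannot bound $|\widetilde\omega\circ\AP|$ by $N_\varepsilon(\pi\circ\Map_\mu,\Delta_{Y,2})$ directly. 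The paper's argument is structurally different here: it chooses a section $s\colon B^F\to X$ with $\beta^F\circ s=\id$, takes an $\varepsilon$-dense set $\{\pi\circ s\circ\phi_t\}_{t\in T}$ with $|T|\leq S_{\varepsilon/2}(\pi\circ\Map_\mu,\Delta_{Y,2})$, and then bounds, for each fixed $t$, the number of values $\alpha\circ\pi$ can take on microstates whose $Y$-image lies in the $\varepsilon$-ball around $\pi\circ s\circ\phi_t$. This local fiber is controlled by choosing subsets of $\{1,\dots,d_i\}$ of size at most $(\kappa+\sqrt\varepsilon)d_i$ (the coordinates where the value of $\alpha$ can change inside the ball), giving a factor $\binom{d_i}{\lfloor(\kappa+\sqrt\varepsilon)d_i\rfloor}|A|^{(\kappa+\sqrt\varepsilon)d_i}$, which is $\exp(o(d_i))$ after letting $\kappa,\varepsilon\to 0$. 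It is precisely this Stirling-type estimate that absorbs the non-injectivity you are missing; without it the counting simply does not go through.
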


\begin{proof}

	By  Lemma \ref{L:siwchingspsm}, we may assume that $\Delta_{Y},\Delta_{X}$ are compatible. Let $M_{X},M_{Y}$ be the diameters of $\Delta_{X},\Delta_{Y}.$ We first prove that
\[h_{(\sigma_{i})_{i},\mu}(\Delta_{Y}:\Delta_{X},\Gamma)\leq h_{(\sigma_{i})_{i},\mu}(Y:X,\Gamma).\]
Let $\varepsilon>0,$ since $Y$ is Polish we can apply Prokhorov's Theorem to find a compact $K\subseteq Y$ so that
\[\nu(Y\setminus K)<\varepsilon.\]
By compactness of $K,$ we may find $y_{1},\dots,y_{n}\in K$ and $\varepsilon>\delta_{1},\dots,\delta_{n}>0$ so that
\[K\subseteq \bigcup_{j=1}^{n}B_{\Delta_{Y}}(y_{j},\delta_{j}),\]
\[B_{\Delta_{Y}}(y_{j},\delta_{j})\in \mathcal{CO}_{\nu}.\]
Set
\[E=Y\setminus \bigcup_{j=1}^{n}B_{\Delta_{Y}}(y_{j},\delta_{j}),\]
and define
\[\alpha\colon Y\to \{0,1\}^{n+1}\]
by
\[\alpha(y)(k)=\begin{cases}
\chi_{B_{\Delta_{Y}}(y_{j},\delta_{j})}(y),& \textnormal{ if $1\leq k\leq n $}\\
\chi_{E}(y), &\textnormal{ if $k=n+1$}
\end{cases}.\]

Let $\beta\colon  X\to B$ be any $\mathcal{CO}_{\mu}(X)$-measurable observable which refines $\alpha\circ \pi.$ Since $\beta$ refines $\alpha\circ \pi,$ we can find a $\omega\colon B\to \{0,1\}^{n+1}$ such that $\omega\circ \beta=\alpha\circ \pi.$ Suppose we are given a finite $F\subseteq \Gamma$ and a $\delta>0.$ By Lemma 3.11 in \cite{Me6} we may find finite $F'\subseteq\Gamma, L'\subseteq C_{b}(X)$ and a $\delta'>0$ so that
\[\beta^{F}_{\sigma_{i}}(\Map_{\mu}(\Delta_{X},F',\delta',L',\sigma_{i}) )\subseteq \AP(\beta,F,\delta,\sigma_{i}).\]
By Lemma 3.10 in \cite{Me6} and Lemma \ref{L:asdlghalsdhn} we may assume that $L'$ is sufficiently large so that
\[(\pi\circ \phi)_{*}(u_{d_{i}})(Y\setminus E)\leq 2\varepsilon,\]
for all $\phi\in \Map_{\mu}(\Delta_{X},F',\delta',L',\sigma_{i})$ and all sufficiently large $i.$ Choose an index set $S$ and elements $\{\phi_{s}\}_{s\in S}$ so that
\[\phi_{s}\in \Map_{\mu}(\Delta_{X},F',L',\delta',\sigma_{i})\mbox{ for all $s\in S$},\]
\[\{\alpha\circ \pi\circ \phi_{s}:s\in S\}=\alpha\circ \pi\circ \Map_{\mu}(\Delta_{X},F',L',\delta',\sigma_{i}),\]
\[\alpha\circ \pi\circ \phi_{s}\ne \alpha\circ \pi\circ \phi_{s'},\mbox{ for $s\ne s'$ in S}.\]
As $\omega\circ \beta\circ \phi_{s}=\alpha\circ \pi\circ \phi_{s},$ we have
\[|S|\leq |\widetilde{\omega}\circ (\AP(\beta,F',\delta',\sigma_{i}))|.\]
Now let $\phi\in \Map_{\mu}(\Delta_{X},F',\delta',\sigma_{i}).$ Choose an $s\in S$ so that
\[\alpha\circ \pi\circ \phi_{s}=\alpha\circ \pi\circ \phi.\]
Then
\[\Delta_{Y,2}(\pi\circ \phi,\pi\circ \phi_{s})^{2}\leq 4M_{Y}^{2}\varepsilon+\frac{1}{d_{i}}\sum_{j:\pi(\phi(j)),\pi(\phi_{s}(j))\notin E}\Delta_{Y}(\pi(\phi(j)),\pi(\phi_{s}(j)))^{2}.\]
If $\phi(j),\phi_{s}(j)$ are not in $E,$ then the fact that $\alpha^{F}(\pi(\phi(j)))(e)=\alpha^{F}(\pi(\phi_{s}(j)))(e)$ implies that
\[\Delta_{Y}(\pi(\phi(j)),\pi(\phi_{s}(j)))<\varepsilon.\]
So
\[\Delta_{Y,2}(\pi\circ \phi, \pi\circ\phi_{s})^{2}<4M_{Y}^{2}\varepsilon+\varepsilon^{2}.\]
Thus
\[h_{(\sigma_{i})_{i},\mu}(\Delta_{Y}:\Delta_{X};2(4M_{Y}^{2}\varepsilon+\varepsilon^{2})^{1/2})\leq h_{(\sigma_{i})_{i},\mu}(\alpha:\beta;F,\delta).\]
Taking the infimum over $\beta,F,\delta$ we find
\[h_{(\sigma_{i})_{i},\mu}(\Delta_{Y}:\Delta_{X};2(4M_{Y}^{2}\varepsilon+\varepsilon^{2})^{1/2})\leq h_{(\sigma_{i})_{i},\mu}(\alpha:\CO_{\mu})\leq h_{(\sigma_{i}),\mu}(Y:X,\Gamma).\]
And letting $\varepsilon\to 0$ shows that
\[h_{(\sigma_{i})_{i},\mu}(\Delta_{Y}:\Delta_{X},\Gamma)\leq  h_{(\sigma_{i}),\mu}(Y:X,\Gamma).\]

	We now turn to the reverse inequality. Let $\alpha\colon Y\to A$ be a $\CO_{\nu}$-measurable observable. Fix $\kappa>0$ and let $\kappa'>0$ depend upon $\kappa$ in a manner to be determined later. By Lemma 3.10 in \cite{Me6} we may choose an $\eta>0$ and $L_{0}\subseteq C_{b}(Y)$ finite so that if $\zeta\in \Prob(Y)$ and
	\[\left|\int_{Y} f\,d\mu-\int_{Y} f\,d\zeta\right|<\eta,\]
for all $f\in L_{0},$ then
\[|\zeta(\alpha^{-1}(\{a\}))-\mu(\alpha^{-1}(\{a\}))|<\kappa',\mbox{ for all $a\in A$,}\]
\[\zeta(\mathcal{O}_{\eta}(\alpha^{-1}(\{a\}))\setminus \alpha^{-1}(\{a\})_{\eta})<\kappa'\mbox{ for all $a\in A.$}\]
Let $F'\subseteq \Gamma,L'\subseteq C_{b}(X)$ be given finite sets and $\delta'>0$ be given. We assume that
\[L'\supseteq\{f\circ \pi:f\in L_{0}\}.\]
By Lemma 3.11 in \cite{Me6}, we may choose a refinement $\beta\colon X\to B$ of $\alpha,$ a finite $F\subseteq\Gamma$ and a $\delta>0$ so that if $\beta^{F}\circ \phi\in \AP(\beta,F,\delta,\sigma_{i}),$ then $\phi\in \Map_{\mu}(\Delta_{X},F',L',\delta',\sigma_{i}).$ Choose $\omega\colon B\to A$ so that $\beta\circ \omega=\alpha\circ \pi$ and choose a map $s\colon B^{F}\to X$ so that $\id=\beta^{F}\circ s.$ By construction, if $\phi\in \AP(\beta,F',\delta',\sigma_{i}),$ then we have
\[s\circ \phi\in\Map_{\mu}(\Delta_{X},F,\delta,L,\sigma_{i}).\]
Let $\varepsilon>0$ be sufficiently small depending upon $\eta$ in a manner to be determined later. Let $T$ be an index set and $\{\phi_{t}\}_{t\in T}$ be such that
\[\phi_{t}\in\AP(\beta,F',\delta',\sigma_{i})\mbox{ for all $t\in T$},\]
\[\{\pi\circ s\circ\phi_{t}:t\in T\}\mbox{ is $\varepsilon$-dense in $\{\pi\circ s\circ \phi:\phi\in \AP(\beta,F',\delta',\sigma_{i})\}$}\]
\[\pi\circ s\circ \phi_{t}\ne \pi\circ s\circ \phi_{t'}\mbox{ if $t\ne t'.$}\]
We may choose such a $T$ with
\[|T|\leq S_{\varepsilon/2}(\pi\circ \Map_{\mu}(\Delta_{X},F',L',\delta',\sigma_{i})).\]
Then
\[\widetilde{\omega}\circ (\AP(\beta,F,\delta,\sigma_{i}))\subseteq\bigcup_{t\in T}\alpha\circ \pi\circ (B_{\Delta_{Y,2}}(\pi\circ s\circ\phi_{t},\varepsilon)\cap \Map_{\mu}(\Delta_{X},F',L',\delta',\sigma_{i})).\]
We thus have to bound $|\alpha\circ \pi\circ (B_{\Delta_{Y,2}}(\pi\circ s\circ\phi_{t},\varepsilon)\cap \Map_{\mu}(\Delta_{X},F',L',\delta',\sigma_{i}))|$ from above. Fix $t\in T,$ suppose that $\phi\in\Map_{\mu}(\Delta_{X},F',\delta',L',\sigma_{i})$ and that $\Delta_{2,Y}(\pi\circ \phi,\pi\circ s\circ \phi_{t})<\varepsilon.$
Let
\[C=\bigcup_{a\in A}\{1\leq j\leq d_{i}:\pi(\phi(j))\in \mathcal{O}_{\eta}(\alpha^{-1})(\{a\}))\setminus \alpha^{-1}(\{a\})_{\eta}\}\cup\{1\leq j\leq d_{i}:\pi(s(\phi_{t}(j)))\in \mathcal{O}_{\eta}(\alpha^{-1}(\{a\})\setminus \alpha^{-1}(\{a\})_{\eta}\}.\]
If we choose $\kappa'$ sufficiently small, we then have that $u_{d_{i}}(C)\leq\kappa.$ Let
\[D=\{1\leq j\leq d_{i}:\Delta(\pi(\phi(j)),\pi(s(\phi_{t}(j))))\geq \sqrt{\varepsilon}\},\]
so $u_{d_{i}}(D)\leq \sqrt{\varepsilon}.$ For $j\in\{1,\dots d_{i}\}\setminus (C\cup D),$ we have that $\pi(s(\phi_{t}(j)))\in \mathcal{O}_{\sqrt{\varepsilon}}(\alpha^{-1}(\{a\})).$ Hence if we choose $\sqrt{\varepsilon}<\eta,$ then $\alpha(\pi(s(\phi_{t}(j))))=a$ for all $j\in\{1,\dots,d_{i}\}\setminus (C\cup D).$ So we can find a  $\mathcal{V}\subseteq\{1,\dots,d_{i}\}$ with $u_{d_{i}}(\mathcal{V})\geq 1-\kappa-\sqrt{\varepsilon}$ and $\alpha(\pi(s(\phi_{t}(j))))=\alpha(\pi(\phi(j)))$ for all $j\in \mathcal{V}.$ Thus
\begin{align*}
|\alpha\circ \pi\circ (B_{\Delta_{Y,2}}(\pi\circ s\circ\phi_{t},\varepsilon)\cap \Map_{\mu}(\Delta_{X},F',L',\delta',\sigma_{i}))|&\leq \sum_{\substack{\mathcal{V}\subseteq\{1,\dots,d_{i}\},\\ |\mathcal{V}|\leq (\kappa+\sqrt{\varepsilon})d_{i}}}|A|^{|\mathcal{V}|}\\
&\leq \sum_{l=1}^{\lfloor{(\kappa+\sqrt{\varepsilon})d_{i}\rfloor}}\binom{d_{i}}{l}|A|^{l}.
\end{align*}
If $\kappa+\sqrt{\varepsilon}<1/2$ then for all large $i$ we have
\[\binom{d_{i}}{l}\leq \binom{d_{i}}{\lfloor{\kappa+\sqrt{\varepsilon\rfloor}}d_{i}}.\]

So by Stirling's Formula the above sum is at most
\[R(\kappa+\sqrt{\varepsilon})d_{i}\exp(d_{i}H(\kappa+\sqrt{\varepsilon}))|A|^{\kappa d_{i}}\]
for some constant $R>0,$ where
\[H(t)=-t\log t-(1-t)\log(1-t)\mbox{ for $0\leq t\leq 1$}.\]
Thus
\[h_{(\sigma_{i})_{i},\mu}(\alpha;\mathcal{B})\leq h_{\Sigma,\mu}(\alpha;\beta,F,\delta)\leq H(\kappa+\sqrt{\varespilon})+\kappa\log|A|+h_{(\sigma_{i})_{i},\mu}(\Delta_{Y}:\Delta_{X};\varepsilon,F',\delta',L').\]
Taking the infimum over $F',\delta',L'$ and then letting $\kappa\to 0$ shows that
\[h_{(\sigma_{i})_{i},\mu}(\alpha;\mathcal{B})\leq h_{(\sigma_{i})_{i},\mu}(\Delta_{Y}:\Delta_{X},\varepsilon).\]
Letting $\varespilon\to 0$ and then taking the supremum over $\alpha$ shows that
\[h_{(\sigma_{i})_{i},\mu}(Y:X,\Gamma)=h_{(\sigma_{i})_{i},\mu}(\Delta_{Y}:\Delta_{X},\Gamma).\]

\end{proof}
If $Y$ is compact we use
\[h_{(\sigma_{i})_{i},\mu}(\Delta_{Y}:\Delta_{X},\infty)\]
for the quantity defined in the same manner as
\[h_{(\sigma_{i})_{i},\mu}(\Delta_{Y}:\Delta_{X})\]
replacing $\Delta_{Y,2}$ with $\Delta_{Y,\infty}.$ We apply similar remarks for
\[h_{(\sigma_{i})_{i},\mu}(\Delta_{Y},\varepsilon,F,\delta,L,,\infty)\]
\[h_{(\sigma_{i})_{i},\mu}(\Delta_{Y},\varepsilon,\infty).\]
\begin{proposition} Let $\Gamma$ be a countable discrete sofic group with sofic approximation $\sigma_{i}\colon\Gamma\to S_{d_{i}}.$ Let  $X$ and $Y$ be Polish spaces with $\Gamma\actson X,\Gamma\actson Y$ by homeomorphisms. Suppose that $Y$ is compact and  that there exists a $\Gamma$-equivariant quotient map $\pi\colon X\to Y$. Let $\mu,\nu$ be $\Gamma$-invariant Borel probability measures on $X,Y$ with $\pi_{*}\mu=\nu.$ Let $\Delta_{X},\Delta_{Y}$ be bounded dynamically generating pseudometrics for $X,Y.$ Then
\[h_{(\sigma_{i})_{i},\mu}(\Delta_{Y,\infty}:\Delta_{X},\infty,\Gamma)=h_{(\sigma_{i})_{i},\mu}(Y:X,\Gamma).\]

\end{proposition}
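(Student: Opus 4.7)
The plan is to adapt the first half of the proof of Theorem \ref{T':topmodyo}, using compactness of $Y$ to promote the $\ell^{2}$-bound there to an $\ell^{\infty}$-bound. One inequality is essentially free: since $\Delta_{Y,2}\leq \Delta_{Y,\infty}$ pointwise, any $\varepsilon$-separated subset of $Y^{d_{i}}$ under $\Delta_{Y,2}$ remains $\varepsilon$-separated under $\Delta_{Y,\infty}$, hence $N_{\varepsilon}(\cdot,\Delta_{Y,2})\leq N_{\varepsilon}(\cdot,\Delta_{Y,\infty})$. Therefore $h_{(\sigma_{i})_{i},\mu}(\Delta_{Y}:\Delta_{X},\Gamma)\leq h_{(\sigma_{i})_{i},\mu}(\Delta_{Y,\infty}:\Delta_{X},\infty,\Gamma)$, and Theorem \ref{T':topmodyo} identifies the left side with $h_{(\sigma_{i})_{i},\mu}(Y:X,\Gamma)$.

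For the reverse inequality, I fix $\varepsilon>0$. Compactness lets me dispense with the ``exterior'' set $E$ used in Theorem \ref{T':topmodyo}: because $Y$ is compact and for each fixed $y$ the spheres $\{z:\Delta_{Y}(y,z)=r\}$ have positive $\nu$-mass for at most countably many $r$, I can find $y_{1},\dots,y_{n}\in Y$ and $\delta_{1},\dots,\delta_{n}\in(0,\varepsilon/2)$ with $B_{\Delta_{Y}}(y_{k},\delta_{k})\in\mathcal{CO}_{\nu}$ for every $k$ and with $Y=\bigcup_{k=1}^{n}B_{\Delta_{Y}}(y_{k},\delta_{k})$. Define the $\mathcal{CO}_{\nu}$-measurable observable $\alpha\colon Y\to\{0,1\}^{n}$ by $\alpha(y)(k)=\chi_{B_{\Delta_{Y}}(y_{k},\delta_{k})}(y)$, and fix any $\mathcal{CO}_{\mu}$-measurable refinement $\beta\colon X\to B$ of $\alpha\circ\pi$, with $\omega\colon B\to\{0,1\}^{n}$ satisfying $\omega\circ\beta=\alpha\circ\pi$.

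Now I replay the blueprint of Theorem \ref{T':topmodyo}. Given a finite $F\subseteq\Gamma$ and $\delta>0$, Lemma 3.11 of \cite{Me6} combined with Lemma \ref{L:asdlghalsdhn} produces finite $F'\subseteq\Gamma$, $L'\subseteq C_{b}(X)$ and $\delta'>0$ with $\beta^{F}_{\sigma_{i}}(\Map_{\mu}(\Delta_{X},F',\delta',L',\sigma_{i}))\subseteq\AP(\beta,F,\delta,\sigma_{i})$ for every large $i$. I pick $\{\phi_{s}\}_{s\in S}\subseteq\Map_{\mu}(\Delta_{X},F',\delta',L',\sigma_{i})$ realizing each distinct value of $\alpha\circ\pi\circ\phi$; since $\omega\circ\beta\circ\phi_{s}=\alpha\circ\pi\circ\phi_{s}$, this gives $|S|\leq|\widetilde{\omega}\circ\AP(\beta,F,\delta,\sigma_{i})|$. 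For any $\phi$ in the same map-set, pick $s\in S$ with $\alpha\circ\pi\circ\phi=\alpha\circ\pi\circ\phi_{s}$. Because the balls cover $Y$, for every $j$ there is a $k$ with $\pi(\phi(j))\in B_{\Delta_{Y}}(y_{k},\delta_{k})$; matching of $\alpha$-labels forces $\pi(\phi_{s}(j))\in B_{\Delta_{Y}}(y_{k},\delta_{k})$ too, so $\Delta_{Y}(\pi(\phi(j)),\pi(\phi_{s}(j)))<2\delta_{k}<\varepsilon$ uniformly in $j$. Hence $\Delta_{Y,\infty}(\pi\circ\phi,\pi\circ\phi_{s})<\varepsilon$, and $\{\pi\circ\phi_{s}\}_{s\in S}$ is $\varepsilon$-dense in $\pi^{d_{i}}(\Map_{\mu}(\Delta_{X},F',\delta',L',\sigma_{i}))$ under $\Delta_{Y,\infty}$. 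Using (\ref{E:separationspanning}) we obtain $N_{2\varepsilon}(\pi^{d_{i}}(\Map_{\mu}(\Delta_{X},F',\delta',L',\sigma_{i})),\Delta_{Y,\infty})\leq|S|$. Taking $\limsup_{i}$, infima over $\beta,F,\delta$ and $F',\delta',L'$, supremum over $\alpha$, and finally $\varepsilon\to 0$ delivers $h_{(\sigma_{i})_{i},\mu}(\Delta_{Y,\infty}:\Delta_{X},\infty,\Gamma)\leq h_{(\sigma_{i})_{i},\mu}(Y:X,\Gamma)$.

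The only mildly delicate point is the perturbation of ball radii into $\mathcal{CO}_{\nu}$ while preserving a cover of $Y$, which is routine. Once the observable $\alpha$ is in place, the $\ell^{\infty}$-estimate is actually \emph{cleaner} than its $\ell^{2}$-counterpart in Theorem \ref{T':topmodyo}, because there is no exterior set contributing an $M_{Y}^{2}\varepsilon$ term to absorb.
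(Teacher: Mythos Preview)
Your argument is correct, but it proceeds quite differently from the paper's.

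The paper treats Theorem \ref{T':topmodyo} as a black box and bootstraps from the $\ell^{2}$-result to the $\ell^{\infty}$-result by a ``bad set'' replacement argument: given an $\varepsilon'$-spanning set $S'$ for $\pi\circ\Map_{\mu}(\Delta_{X},F,\delta,L,\sigma_{i})$ in $\Delta_{Y,2}$, each $\pi\circ\phi$ agrees with some $\pi\circ\psi\in S'$ to within $\varepsilon$ on a set of coordinates of density at least $1-(\varepsilon'/\varepsilon)^{2}$; on the remaining small set one overwrites by points of a fixed finite $\varepsilon$-net $E\subseteq Y$ (available by compactness). Counting the choices of bad set and of $E$-values on it, then letting $\varepsilon'\to 0$ before $\varepsilon\to 0$, kills the surplus and leaves $h_{(\sigma_{i})_{i},\mu}(\Delta_{Y}:\Delta_{X},\varepsilon,\infty)\leq h_{(\sigma_{i})_{i},\mu}(Y:X,\Gamma)$.

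You instead rerun the observable half of Theorem \ref{T':topmodyo} from scratch, using compactness to make the balls $B_{\Delta_{Y}}(y_{k},\delta_{k})$ cover all of $Y$ so that there is no residual set $E$. This makes the label-matching step give $\Delta_{Y,\infty}<\varepsilon$ on the nose, which is indeed cleaner than the corresponding step in Theorem \ref{T':topmodyo}. The price is that you are duplicating the machinery of that theorem rather than leveraging it; the paper's proof is shorter and more modular precisely because it only has to supply the $\ell^{2}\to\ell^{\infty}$ upgrade. Two minor remarks: your appeal to Lemma \ref{L:asdlghalsdhn} is unnecessary here, since you have no exterior set whose measure needs to be controlled---Lemma 3.11 of \cite{Me6} alone suffices; and you need not take a supremum over $\alpha$ at the end, since the single observable you built already yields the bound by $h_{(\sigma_{i})_{i},\mu}(\alpha\circ\pi:\mathcal{CO}_{\mu})\leq h_{(\sigma_{i})_{i},\mu}(Y:X,\Gamma)$.
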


\begin{proof}
Fix $\varepsilon>0.$ Let $0<\varepsilon'<\varespilon$ be given. Let $E\subseteq Y$ be a finite $\varespilon$-dense set with respect to $\Delta_{Y},$ this may be done as $Y$ is compact. Fix $F\subseteq \Gamma,L\subseteq C_{b}(X)$ finite and $\delta>0.$ Let $S'$
be a $\varepsilon'$-dense subset of $\pi\circ \Map_{\mu}(\Delta_{X},F,\delta,L,\sigma_{i})$  with respect to $\Delta_{Y,2}$ of minimal cardinality. Choose a $T\subseteq \Map_{\mu}(\Delta_{X},F,\delta,L,\sigma_{i})$ so that $\pi\circ T=S$ and $|T|=|S|.$ Given $\phi\in \Map_{\mu}(\Delta_{X},F,L,\delta,\sigma_{i})$  choose a $\psi\in T$ with
\[\Delta_{2}(\pi\circ \phi,\pi\circ \psi)<\varepsilon'.\]
Let
\[C=\{1\leq j\leq d_{i}:\Delta_{Y}(\pi(\phi(j)),\pi(\psi(j)))<\varepsilon\}.\]
Then
\[u_{d_{i}}(C)\geq 1-\left(\frac{\varespilon'}{\varepsilon}\right)^{2}.\]
We thus see that
\[\pi\circ \Map_{\mu}(\Delta_{X},F,\delta,L,\sigma_{i})\subseteq_{\varepsilon,\Delta_{Y,\infty}}\bigcup_{\substack{\mathcal{V}\subseteq \{1,\dots,d_{i}\},\\ u_{d_{i}}(\mathcal{V})\geq 1-\left(\frac{\varespilon'}{\varepsilon}\right)^{2},\\ \psi \in S}} \{\phi\in Y^{d_{i}}:\phi\big|_{\mathcal{V}}=\psi, \phi\big|_{\mathcal{V}^{c}}\in E^{\mathcal{V}^{c}}\}.\]
As in the proof of the preceding theorem, we may find a $R>0$ so that
\[S_{2\varespilon}(\pi\circ \Map_{\mu}(\Delta_{X},F,\delta,L,\sigma_{i}),\Delta_{Y,\infty})\leq R\left(\frac{\varespilon'}{\varepsilon}\right)^{2}\exp\left(H\left(\left(\frac{\varespilon'}{\varepsilon}\right)^{2}\right)d_{i}\right)|E|^{\left(\frac{\varespilon'}{\varepsilon}\right)^{2}d_{i}}|S|.\]
Thus
\[h_{(\sigma_{i}),\mu}(\Delta_{Y}:\Delta_{X},\varepsilon,F,\delta,L,\infty)\leq H\left(\left(\frac{\varespilon'}{\varepsilon}\right)^{2}\right)+\left(\frac{\varespilon'}{\varepsilon}\right)^{2}\log|E|+h_{(\sigma_{i})_{i},\mu}(\Delta_{Y}:\Delta_{X},\varepsilon',F,\delta,L).\]
Taking the infimum over all $F,\delta,L$ we see that
\[h_{(\sigma_{i}),\mu}(\Delta_{Y}:\Delta_{X},\varepsilon,F)\leq  H\left(\left(\frac{\varespilon'}{\varepsilon}\right)^{2}\right)+\left(\frac{\varespilon'}{\varepsilon}\right)^{2}\log|E|+ h_{(\sigma_{i})_{i},\mu}(\Delta_{Y}:\Delta_{X},\varepsilon')\]
Letting $\varepsilon'\to 0$ and then $\varepsilon\to 0$ shows that
\[h_{(\sigma_{i})_{i},\mu}(\Delta_{Y,\infty}:\Delta_{X},\infty,\Gamma)\leq h_{(\sigma_{i})_{i},\mu}(Y:X,\Gamma).\]
Since the reverse inequality is trivial, the proof is complete.

\end{proof}

We use the definition of \emph{singularity} of representations of a $*$-algebra as in \cite{Me6} Definition 4.1. We first make a preliminary observation. Let $A$ be a $*$-algebra and $\rho_{j}\colon A\to B(\mathcal{H}_{j}),j=1,2$ be $*$-representations of $A$ on Hilbert spaces $\mathcal{H}_{j},j=1,2.$ Note that for $\xi\in\mathcal{H}_{1},$ we have that $\overline{A\xi}$ is singular with respect to $\mathcal{H}_{2}$ as a representation of $A$ if and only if $T(\xi)=0$ for all $T\in\Hom_{A}(\rho_{1},\rho_{2}).$

If $\mathcal{F}$ is a family of functions on $X$, we define the factor generated by $\mathcal{F}$ to be the factor associated to the sigma-algebra
 \[\{gf^{-1}(E):f\in \mathcal{F},g\in\Gamma,E\subseteq\CC\mbox{ is Borel} \}.\]
Given an extension
\[\Gamma\actson (X,\mu)\to \Gamma\actson (Y,\nu)\]
We define the factor generated by $\mathcal{F}$ over $Y$ to be the factor generated $\mathcal{F}\cup L^{\infty}(Y).$ If this factor is just $X$ itself, we say that $\mathcal{F}$ generates $X$ over $Y.$

Let $\Gamma$ be a countable, discrete, sofic group with sofic approximation $\sigma_{i}\colon\Gamma\to S_{d_{i}}.$ Let $X$ be a compact, metrizable space with $\Gamma\actson X$ by homeomorphisms. If $\phi\colon\{1,\dots,d_{i}\}\to X,$ and $f=\sum_{g\in\Gamma}f_{g}u_{g}\in C(X)\rtimes_{\textnormal{alg}}\Gamma,$ we define $\phi\rtimes\sigma_{i}(f)\in M_{d_{i}}(\CC)$ by
\[\phi\rtimes\sigma_{i}(f)=\sum_{g\in\Gamma}m_{f_{g}\circ \phi} \sigma_{i}(g).\]
We let $\tau_{\nu}$ be the trace on $L^{\infty}(Y,\nu)\rtimes_{\textnormal{alg}}\Gamma$ given by
\[\tau_{\nu}\left(\sum_{g\in\Gamma}f_{g}u_{g}\right)=\int_{Y}f_{e}\,d\nu.\]

\begin{theorem}\label{T:extensionnonsense} Let $\Gamma$ be a countable discrete group and $\sigma_{i}\colon\Gamma\to S_{d_{i}}$ a sofic approximation. Let $(X,\mu)$ be a standard probability space and $\Gamma\actson (X,\mu)$ a measure-preserving action.  Let $\mathcal{H}$ be a $L^{\infty}(Y)\rtimes_{\textnormal{alg}}\Gamma$-subrepresentation of $L^{2}(X)$ and let $(Z,\zeta)$ be the intermediate factor between $X$ and $Y$ generated by $\mathcal{H}\cup L^{\infty}(Y).$  If $\mathcal{H}$ is singular with respect to $L^{2}(Y,\ell^{2}(\Gamma))$ as a representation of $L^{\infty}(Y)\rtimes_{\textnormal{alg}}\Gamma,$ then
\[h_{(\sigma_{i})_{i},\mu}(Z:X,\Gamma)=h_{(\sigma_{i})_{i},\mu}(Y:X:\Gamma).\]
In particular,
\[h_{(\sigma_{i})_{i},\zeta}(Z,\Gamma)\leq h_{(\sigma_{i})_{i},\nu}(Y,\Gamma).\]
\end{theorem}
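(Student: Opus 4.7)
The plan is to extend the representation-theoretic argument from Theorem 1.1 of \cite{Me6} from unitary representations of $\Gamma$ to $*$-representations of $L^\infty(Y)\rtimes_{\mathrm{alg}}\Gamma$, using the topological-model formulation of extension entropy provided by Theorem \ref{T':topmodyo}. The inequality $h_{(\sigma_i)_i,\mu}(Y:X,\Gamma)\le h_{(\sigma_i)_i,\mu}(Z:X,\Gamma)$ is automatic from monotonicity of extension entropy under factoring the first argument (since $Y$ is a factor of $Z$), so only the reverse inequality requires argument.

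First I would pass to a convenient topological model. Choose a sequence $\{f_n\}_{n\in\NN}\subseteq\mathcal{H}\cap L^\infty(X,\mu)$ with $\|f_n\|_\infty\le 1$ whose $L^2$-closed linear span is $\mathcal{H}$; together with a countable generating family for $L^\infty(Y)$ these generate $Z$ over $Y$. Realize $X$, $Y$, and $Z$ as compact metrizable $\Gamma$-spaces with continuous $\Gamma$-equivariant factor maps $\pi:X\to Z\to Y$, chosen so that every $f_n$ and every generator of $L^\infty(Y)$ is continuous on $X$ (e.g.\ by embedding $X$ into $\DD^{\NN}\times Y_{\mathrm{top}}$). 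Equip $X$, $Y$, $Z$ with bounded dynamically generating metrics $\Delta_X$, $\Delta_Y$, $\Delta_Z$. By Theorem \ref{T':topmodyo} it suffices to show
\[ h_{(\sigma_i)_i,\mu}(\Delta_Z:\Delta_X)\le h_{(\sigma_i)_i,\mu}(\Delta_Y:\Delta_X). \]

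The heart of the argument is a compression lemma in the algebraic crossed product: singularity of $\mathcal{H}$ with respect to $L^2(Y,\nu,\ell^2(\Gamma))$ as a representation of $L^\infty(Y)\rtimes_{\mathrm{alg}}\Gamma$ should imply that, for every finite $\mathcal{F}\subseteq\mathcal{H}$ and every $\varepsilon>0$, there exists $T\in L^\infty(Y)\rtimes_{\mathrm{alg}}\Gamma$ with $\|T\xi-\xi\|_2<\varepsilon\|\xi\|_2$ for every $\xi\in\Span\mathcal{F}$ and $\tau_\nu(T^*T)<\varepsilon$. The natural route to $T$ is to pass to the von Neumann algebra completion $M=L^\infty(Y)\rtimes\Gamma$, whose standard form is precisely $L^2(Y,\nu,\ell^2(\Gamma))$; the algebraic singularity hypothesis then forces the strong-operator closure of $\mathcal{H}$ inside $L^2(X)$ to be a singular $M$-submodule in the Murray--von Neumann sense, so the spectral theorem inside $M$ produces projections $p\in M$ of small $\tau_\nu$-trace that approximately fix $\mathcal{F}$; Kaplansky density then provides $T$ in the algebraic crossed product with the required estimates. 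This is the step I expect to be the principal obstacle, since one must perform the spectral approximation while simultaneously controlling the trace and while confining the final operator to the algebraic crossed product rather than its von Neumann algebra closure.

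Given such $T$, the microstate count is routine. For $\phi\in\Map_\mu(\Delta_X,F,\delta,L,\sigma_i)$, form the $d_i\times d_i$ matrix $\phi\rtimes\sigma_i(T)$; provided $F,\delta,L$ are refined so as to control the coefficients of $T$ and $T^*T$ (and so that $\pi\circ\phi$ is itself a $Y$-microstate via Lemma \ref{L:asdlghalsdhn}), the standard sofic trace estimates give $\tfrac{1}{d_i}\Tr(|\phi\rtimes\sigma_i(T)|^2)<2\varepsilon$ and $\|(\phi\rtimes\sigma_i(T))(f_n\circ\phi)-f_n\circ\phi\|_2^2<2\varepsilon$ for every $f_n\in\mathcal{F}$ and all large $i$. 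Consequently $\phi\rtimes\sigma_i(T)$ has approximate rank at most $2\varepsilon d_i$, and each vector $f_n\circ\phi$ lies $L^2$-close to its image, so $f_n\circ\phi$ is determined, up to small $\ell^2$-perturbation and in all but $O(\sqrt{\varepsilon})d_i$ coordinates, by $\pi\circ\phi$ (through the $L^\infty(Y)$-coefficients of $T$) together with at most $2\varepsilon d_i$ additional scalars in $\DD$. A $\kappa$-net in $\DD$ together with a Stirling estimate, modeled on the second half of the proof of Theorem \ref{T':topmodyo}, then bounds the number of $\Delta_{Z,2}$-separated $Z$-microstates lying over a single $\Delta_{Y,2}$-separated $Y$-microstate by $\exp(C\varepsilon|\log\varepsilon|d_i)$. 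Summing, letting $\mathcal{F}$ exhaust $\{f_n\}$ and finally $\varepsilon\to 0$, yields the required inequality. The ``in particular'' assertion $h_{(\sigma_i)_i,\zeta}(Z,\Gamma)\le h_{(\sigma_i)_i,\nu}(Y,\Gamma)$ then follows from the main equality together with the standard monotonicity and subadditivity properties of extension entropy recalled in the introduction.
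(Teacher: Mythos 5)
Your proposal follows essentially the same route as the paper: pass to topological models via Theorem \ref{T':topmodyo}, use singularity to produce an element $f$ of the algebraic crossed product with $\|\rho(f)\|\leq 1$, small trace $\tau_{\nu}(f^{*}f)$, and $\rho(f^{*}f)\xi\approx\xi$ (your ``compression lemma'' is exactly Proposition 4.2 of \cite{Me6}, which the paper simply cites, so the step you flag as the principal obstacle is already available), then transfer to microstates, bound the approximate rank of $\phi_{Y}\rtimes\sigma_{i}(f)$, and count with a net plus a Stirling estimate. The only organizational difference is that the paper first reduces to a cyclic $\mathcal{H}$ and adds one generator at a time using Lemma 7.9 of \cite{Me6}, and counts $Y$-microstates in the $\Delta_{Y,\infty}$ metric to control the crossed-product coefficients uniformly, whereas you handle finite generating sets directly and exhaust; both variants work.
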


\begin{proof}

We let
\[\rho\colon L^{\infty}(Y)\rtimes_{\textnormal{alg}}\Gamma\to B(L^{2}(X))\]
be defined as before the theorem. It is clear that
\[h_{(\sigma_{i})_{i},\mu}(Z:X,\Gamma)\geq h_{(\sigma_{i})_{i},\mu}(Y:X,\Gamma)\]
so it suffices to show that
\[h_{(\sigma_{i})_{i},\mu}(Z:X,\Gamma)\leq h_{(\sigma_{i})_{i},\mu}(Y:X,\Gamma).\]

This is more or less implicit in \cite{Me6} Theorem 7.8, but we shall present a simplified proof.  Without loss of generality, suppose that $Y$ is a compact metrizable space, that $\Gamma\actson Y$ by homeomorphisms, and that $\nu$ is a Borel measure on $Y.$ Let $(\xi_{n})_{n=1}^{\infty}$ be a dense sequence in $\mathcal{H}.$

	We first reduce to the case that $\mathcal{H}$ is cyclic as a representation of $L^{\infty}(Y)\rtimes_{\textnormal{alg}}\Gamma.$ So suppose that we can prove the theorem in the case that $\mathcal{H}$ is cyclic. For $n\geq 1,$ let $(Z_{n},\zeta_{n})$ be the factor of  $X$ generated by $L^{\infty}(Y)$ and the functions
\[\{\xi_{1},\dots,\xi_{n}\}.\]
We use $(Z_{0},\zeta_{0})$ for $(Y,\nu).$ For $n\geq 1,$ let $\mathcal{K}_{n}$ be the smallest closed $L^{\infty}(Z_{n-1})\rtimes_{\textnormal{alg}}\Gamma$-invariant subspace of $L^{2}(X)$ containing $\xi_{n}.$  We claim that $\mathcal{K}_{n}$ is singular with respect to $L^{2}(Z_{n-1},\ell^{2}(\Gamma))$ as a representation of $L^{\infty}(Z_{n-1})\rtimes_{\textnormal{alg}}\Gamma.$ To see this suppose that
\[T\colon \mathcal{K}_{n}\to L^{2}(Z_{n-1},\ell^{2}(\Gamma))\]
is a nonzero, $L^{\infty}(Z_{n-1})\rtimes_{\textnormal{alg}}\Gamma$-equivariant, bounded, linear map. Then $T$ is $L^{\infty}(Y)\rtimes_{\textnormal{alg}}\Gamma$-equivariant. Since $L^{2}(Z_{n-1},\ell^{2}(\Gamma)$ embeds into
\[L^{2}(Y,\ell^{2}(\Gamma))^{\oplus\infty}\]
as a representation of $L^{\infty}(Y)\rtimes_{\textnormal{alg}}\Gamma$, and $\mathcal{H}$ is singular with respect to $L^{2}(Y,\ell^{2}(\Gamma)),$ our observation before the theorem shows that $T(\xi_{n})=0.$ Since $T$ is $L^{\infty}(Z_{n-1})\rtimes_{\textnormal{alg}}\Gamma$-equivariant and $\mathcal{K}_{n}$ is generated by $\xi_{n}$ as a representation of $L^{\infty}(Z_{n-1})\rtimes_{\textnormal{alg}}\Gamma$, we see that $T=0.$ Thus $\mathcal{K}_{n}$ is singular with respect to $L^{2}(Z_{n-1},\ell^{2}(\Gamma))$ as a representation of $L^{\infty}(Z_{n-1})\rtimes_{\textnormal{alg}}\Gamma.$

Since we are assuming, we can prove the Theorem in the cyclic case we see inductively that
\[h_{(\sigma_{i})_{i},\mu}(Z_{n}:X)= h_{(\sigma_{i})_{i},\mu}(Y:X)\]
for all $n\geq 1.$ As
\[h_{(\sigma_{i})_{i},\mu}(Z:X,\Gamma)\leq \liminf_{n\to\infty}h_{(\sigma_{i})_{i},\mu_{n}}(Z_{n}:X,\Gamma)\]
by Lemma 7.9 of \cite{Me6}, we have
\[h_{(\sigma_{i})_{i},\mu}(Z:X,\Gamma)\leq h_{(\sigma_{i})_{i},\mu}(Y:X,\Gamma).\]

 	Thus we may assume that $\mathcal{H}$ can be generated over $Y$ by a single $\xi\in L^{2}(X)\ominus L^{2}(Y).$ Without loss of generality, we may assume that
\[\|\xi\|_{2}\leq 1.\]
Arguing as in Theorem 7.8 of \cite{Me6} we may assume that $Z=\CC^{\Gamma}\times Y,$ that the factor map $\pi_{Y}\colon Z\to Y$ is projection onto the second factor,  that $\Gamma\actson Z$ is the product action where $\Gamma\actson \CC^{\Gamma}$ by Bernoulli shifts, and that $\xi$ is given by $\xi(z,y)=z(e).$ We may also assume that $X$ is a Polish space and that $Z$ is a continuous factor of $X$, let $\pi_{X}\colon X\to Z$ be the factor map.
 Let $\Delta_{Y}$ be a compatible metric on $Y,$ and let $\Delta_{Z}$ be the dynamical generated metric on $Z$ defined by
\[\Delta_{Z}((z_{1},y_{1}),(z_{2},y_{2}))=\min(|z_{1}(e)-z_{2}(e)|,1)+\Delta_{Y}(y_{1},y_{2}).\]
Fix a dynamically generating pseudometric $\Delta_{X}$ on $X.$ Using Proposition 4.2 in \cite{Me6} and the density of $C(Y)$ inside $L^{\infty}(Y,\nu)$ in the weak operator topology, it is not hard to argue that $L^{2}(Z)\ominus L^{2}(Y)$ is singular with respect to $L^{2}(Z,\zeta,\ell^{2}(\Gamma))$ as a representation of $C(Y)\rtimes_{\textnormal{alg}}\Gamma.$

	Let $1>\varepsilon>0,$ and let $0<\eta<\varepsilon$ be arbitrary and let $\eta>0.$ Since $\mathcal{H}$ is singular with respect to $L^{2}(Z,\zeta,\ell^{2}(\Gamma)),$  by Proposition 4.2 of \cite{Me6} we can find a $f\in C(Y)\rtimes_{\textnormal{alg}}\Gamma$ with
\[\|\rho(f)\|\leq 1\]
\[\tau_{\nu}(f^{*}f)<\eta^{2}\]
\[\|\rho(f^{*}f)\xi-\xi\|_{2}<\eta.\]
Write
\[f=\sum_{g\in E}f_{g}u_{g}.\]
with $E$ a finite subset of $\Gamma.$ Let $M>0.$ Choose a $G\in C_{c}(\CC)$ so that $G(z)=z$ if $|z|\leq M$ and $\|G\|_{C_{b}(\CC)}\leq M.$ We may suppose that $M$ is sufficiently large so that
\[\|G\circ \xi-\xi\|_{2}<\varepsilon\]
\[\mu(\{(z,y):|z(e)|\geq M\})\leq \varepsilon.\]
Since $\|\rho(f)\|\leq 1$ we have
\[\|\rho(f^{*}f)(G\circ \xi)-G\circ \xi\|_{2}<3\varepsilon.\]

	For $\phi\in X^{d_{i}},$  we set $\phi_{Y}=\pi_{Y}\circ\pi_{Z}\circ \phi,$ $\phi_{Z}=\pi_{Z}\circ \phi.$ Fix finite $F\subseteq\Gamma,L\subseteq C_{b}(X)$ and $\delta>0.$  By Lemma 7.7 of \cite{Me6}, we may assume that $F,\delta,L$ are chosen appropriately so that for all large $i$ and all  $\phi\in\Map_{\mu}(\Delta_{X},F,\delta,L,\sigma_{i})$ we have
	\[\|(\phi_{Y}\rtimes \sigma_{i})(f)^{*}(\phi_{Y}\rtimes \sigma_{i})(f)(G\circ \xi\circ \phi_{Z})-G\circ \xi\circ \phi_{Z}\|_{2}<4\varepsilon,\]
	\[\|(\phi_{Y}\rtimes \sigma_{i})(f)\|_{2}<2\eta,\]
	\[\|G\circ \xi\circ\phi_{Z}\|_{2}\leq 2,\]
	\[u_{d_{i}}(\{j:|\phi(j)|\geq M\})\leq 2\varepsilon.\]
For $\phi\in\Map_{\mu}(\Delta_{X},F,\delta,L,\sigma_{i}),$  set
\[p_{\phi}=\chi_{[0,\sqrt{\varpesilon}]}(|1-(\phi_{Y}\rtimes \sigma_{i})(f)^{*}(\phi_{Y}\rtimes \sigma_{i})(f)|)=\chi_{[1-\sqrt{\varpesilon},1+\sqrt{\varepsilon}]}(\phi_{Y}\rtimes \sigma_{i})(f)^{*}(\phi_{Y}\rtimes \sigma_{i})(f)).\]
Thus for all large $i,$
\[\tr(p_{\phi})\leq \frac{1}{(1-\sqrt{\varepsilon})}\|(\phi_{Y}\rtimes \sigma_{i})(f)\|_{2}^{2}\leq \frac{4\eta^{2}}{(1-\sqrt{\varepsilon})}.\]
By Lemma 2.7 of \cite{Me6} we find, for all large $i,$
\[S_{\phi}\subseteq Mp_{\phi}\Ball(\ell^{2}(d_{i},u_{d_{i}}))\]
 an $\varepsilon$-dense set with respect to $\|\cdot\|_{2}$ of cardinality  at most
\[M^{8\frac{\eta^{2}}{(1-\sqrt{\varepsilon})} d_{i}}\left(\frac{3+3\varepsilon}{\varpesilon}\right)^{\frac{8\eta^{2}}{(1-\sqrt{\varpesilon})} d_{i}}.\]

	Let $\kappa>0$  depend upon $\varepsilon,E$ in a manner to be determined later. Let $\varepsilon>\varepsilon'>0$ be such that if $a,b\in Y^{d_{i}}$ and $\Delta_{Y,\infty}(a,b)<\varepsilon'$ then
	\[\|f_{g}\circ a-f_{g}\circ b\|_{\infty}<\kappa\mbox{ for all $g\in E$}\]
 Choose a $D\subseteq \Map_{\mu}(\Delta_{X},F,\delta,L,\sigma_{i})$ so that $\{\psi_{Y}:\psi\in D\}$ is $\varespilon'$-dense in
\[\{\phi_{Y}:\phi\in \Map_{\mu}(\Delta_{X},F,\delta,L,\sigma_{i})\}\]
with respect to $\Delta_{Y,\infty}$ and
\[|D|=S_{\varepsilon'}(\{\phi_{Y}:\phi\in \Map_{\mu}(\Delta_{X},F,\delta,L,\sigma_{i})\},\Delta_{Y,\infty}).\]
 Suppose that $\phi,\psi\in  \Map_{\mu}(\Delta_{X},F,\delta,L,\sigma_{i})$ and that
\[\Delta_{Y,\infty}(\phi_{Y}, \psi_{Y})<\varepsilon'.\]
Then

\[\|(\phi_{Y}\rtimes \sigma_{i})(f)^{*}(\phi_{Y}\rtimes \sigma_{i})(f)(G\circ\xi\circ \phi_{Z})-(\psi_{Y}\rtimes \sigma_{i})(f)^{*}(\psi_{Y}\rtimes \sigma_{i})(f)(G\circ \xi\circ \phi_{Z})\|_{2}\leq\]
\[ \sum_{g,h\in E}\|(f_{g}\circ \phi_{Y})\cdot(f_{h}\circ \phi_{Y}\circ\sigma_{i}(g))\cdot(\sigma_{i}(g)(G\circ \xi\circ \phi_{Z}))-(f_{g}\circ \psi_{Y})\cdot(f_{h}\circ \psi_{Y} \circ \sigma_{i}(g))\cdot (\sigma_{i}(g)(G\circ \xi\circ \phi_{Z}))\|_{2}\leq\]
\[ 2|E|^{2}\kappa,\]
as
\[\|G\circ \xi\circ\phi_{Z}\|_{2}\leq 2.\]
Now choose $\kappa$ so that
\[2|E|^{2}\kappa<\varepsilon.\]

	Suppose that $\phi\in \Map_{\mu}(\Delta_{X},F,\delta,L,\sigma_{i})$ and that $\psi\in D$ has
\[\Delta_{Y,\infty}(\phi,\psi)<\varepsilon'.\]
By the above, we have
\[\|(\psi\rtimes\sigma_{i})(f)(G\circ \xi\circ \phi_{Z})-G\circ \xi\circ \phi_{Z}\|_{2}\leq \varepsilon+\|(\phi \rtimes \sigma_{i}(f))(G\circ \xi\circ \phi_{Z})-(G\circ \xi\circ \phi_{Z})\|_{2}<5\varepsilon.\]
If $i$ sufficiently large, we may choose a $w\in S_{\psi}$ so that
\[\|w-p_{\psi}(G\circ \xi\circ \phi_{Z})\|_{2}<\varepsilon.\]
Then
\begin{align*}
\|w-G\circ \xi\circ\pi_{Z}\circ \phi\|_{2}&\leq \varepsilon+\|(p_{\psi}-1)(G\circ \xi\circ \pi_{Z}\circ \phi)\|_{2}\\
&=\varespilon+\ip{(1-p_{\psi})(G\circ \xi\circ \phi_{Z}),G\circ \xi\circ \phi_{Z}}^{1/2}\\
&\leq \varepsilon+\frac{1}{\sqrt{\varepsilon}}\ip{|1-(\psi\rtimes\sigma_{i})(f)^{*}(\psi\rtimes\sigma_{i})(f)|^{2}(G\circ \xi\circ \pi_{Z}\circ \phi),G\circ \xi\circ\pi_{Z}\circ \phi}^{1/2}\\
&= \varepsilon+\frac{1}{\sqrt{\varepsilon}}\|(1-(\psi\rtimes\sigma_{i})(f)^{*}(\psi\rtimes\sigma_{i})(f))(G\circ \xi\circ \phi_{Z})\|_{2}\\
&\leq \varepsilon+\frac{4\varepsilon}{\sqrt{\varepsilon}}\\
& <5\sqrt{\varepsilon}.
\end{align*}
Given $v\in \CC^{d_{i}},$ define
\[\Theta_{v}\colon \{1,\dots,d_{i}\}\to \CC^{\Gamma}\]
by
\[\Theta_{v}(j)(g)=v(\sigma_{i}(g)^{-1}(j)).\]
Given $v\in \CC^{d_{i}},\beta\in Y^{d_{i}}$ define $\Gamma_{v,\beta}\in (\CC^{\Gamma}\times Y)^{d_{i}}$ by
\[\Gamma_{v,\beta}(j)=(\Theta_{v}(j),\beta(y)).\]
We now estimate
\[\Delta_{Z,2}(\phi,\Gamma_{w,\psi_{Y}}).\]
We have
\begin{align*}
\Delta_{Z,2}(\phi,\Gamma_{w,\psi_{Y}})&\leq \varespilon'+\left(\frac{1}{d_{i}}\sum_{j=1}^{d_{i}}\min(1,|w(\sigma_{i}(e)^{-1}(j))-\xi(\phi(j)|)^{2}\right)^{1/2}\\
&\leq \varepsilon'+\left(u_{d_{i}}(\{j:\sigma_{i}(e)(j)\ne j\})+2\varepsilon+\sum_{j:\sigma_{i}(e)(j)=j,|Z(\phi(j))|\leq M|}|w(\sigma_{i}(e)^{-1})-\xi(\phi(j))|^{2}\right)^{1/2}\\
&\leq \varepsilon'+\varespilon+\sqrt{2\varepsilon}+u_{d_{i}}(\{j:\sigma_{i}(e)(j)\ne j\})^{1/2}+\|w-G\circ \xi\circ \phi_{Z}\|_{2}\\
&\leq 2\varepsilon +(6+\sqrt{2})\sqrt{\varepsilon}+u_{d_{i}}(\{j:\sigma_{i}(e)(j)\ne j\})^{1/2}.
\end{align*}
Since $\varepsilon<1,$ we see that for all large $i,$ we have
\[\Delta_{Z,2}(\phi,\Gamma_{w,\psi_{Y}})<10\sqrt{\varepsilon}.\]
Thus
\[\pi_{Z}\circ \Map_{\mu}(\Delta_{X},F,\delta,L,\sigma_{i})\subseteq_{10\sqrt{\varepsilon}} \{\Gamma_{w,\psi_{Y}}:\psi\in D,\xi\in S_{\psi}\}.\]
Thus
\begin{align*}
N_{20\sqrt{\varepsilon}}(\pi_{Z}\circ \Map(\Delta_{X},F,\delta,L,\sigma_{i}),\Delta_{Z,2})&\leq N_{\varepsilon'/2}(\pi^{d_{i}}(\Map(\Delta_{Z}:\Delta_{X},F,\delta,L,\sigma_{i})),\Delta_{Y,\infty})\\
&\times M^{8\frac{\eta^{2}}{(1-\sqrt{\varepsilon})} d_{i}}\left(\frac{3+3\varepsilon}{\varpesilon}\right)^{\frac{8\eta^{2}}{(1-\sqrt{\varpesilon})} d_{i}},
\end{align*}
so
\begin{align*}
h_{(\sigma_{i}),\mu}(\Delta_{Z}:\Delta_{X},20\sqrt{\varepsilon},F,\delta,L)&\leq h_{(\sigma_{i})_{i},\mu}(\Delta_{Y}:\Delta_{X};\varepsilon'/2,F,\delta,L,\infty)+\frac{8\eta^{2}}{(1-\sqrt{\varespilon})^{2}}\log\left(\frac{3+3\varespilon}{\varepsilon}\right)\\
&+\frac{8\eta^{2}}{(1-\sqrt{\varepsilon})^{2}}\log M.
\end{align*}
Since this holds for all sufficiently large $F,\delta,L$ we can take the infimum over all $F,\delta,L$ to see that
\[h_{(\sigma_{i}),\mu}(\Delta_{Z}:\Delta_{X},40\sqrt{\varepsilon},\sigma_{i})\leq h_{(\sigma_{i})_{i},\mu}(\Delta_{Y}:\Delta_{X};\varepsilon'/2,\infty)+\frac{8\eta^{2}}{(1-\sqrt{\varespilon})^{2}}\log\left(\frac{3+3\varespilon}{\varepsilon}\right)+\frac{8\eta^{2}}{(1-\sqrt{\varepsilon})^{2}}\log M .\]
A fortiori,
\[h_{(\sigma_{i}),\mu}(\Delta_{Z}:\Delta_{X},40\sqrt{\varepsilon},\sigma_{i})\leq h_{(\sigma_{i})_{i},\mu}(Y:X,\Gamma)+\frac{8\eta^{2}}{(1-\sqrt{\varespilon})^{2}}\log\left(\frac{3+3\varespilon}{\varepsilon}\right)+\frac{8\eta^{2}}{(1-\sqrt{\varepsilon})^{2}}\log M.\]
Since $M$ only depends upon $\varepsilon,$ and $\eta$ can be any number less than $\varepsilon,$ we can let $\eta\to 0$ to see that
\[h_{(\sigma_{i}),\mu}(\Delta_{Z}:\Delta_{X},40\sqrt{\varepsilon},\sigma_{i})\leq h_{(\sigma_{i})_{i},\mu}(Y:X,\Gamma).\]
Taking the supremum over $\varepsilon$ completes the proof.

The ``in particular'' part follows since we may take $X=Z$ to see that
\[h_{(\sigma_{i})_{i},\zeta}(Z,\Gamma)=h_{(\sigma_{i})_{i},\zeta}(Z:Z,\Gamma)=h_{(\sigma_{i})_{i},\zeta}(Y:Z,\Gamma)\leq h_{(\sigma_{i})_{i},\nu}(Y,\Gamma).\]

\end{proof}

\section{Mixing and Strong Ergodicity over the Outer Pinsker Factor}

We give the definition of the \emph{Pinsker Factor} and the \emph{Outer Pinsker Factor}.

\begin{definition}\emph{Let $\Gamma$ be a countable discrete sofic group with sofic approximation $\sigma_{i}\colon\Gamma\to S_{d_{i}}.$ Let $(X,\mathcal{M},\mu)$ be a standard probability space and $\Gamma\actson (X,\mathcal{M},\mu)$ a measure-preserving action. We define $\Pi_{(\sigma_{i})_{i},\Gamma,X}$ to be the sigma-algebra generated by all finite observables $\alpha$ so that}
\[h_{(\sigma_{i})_{i},\mu}(\alpha:\mathcal{M},\Gamma)\leq 0.\]
\emph{Let $Y$ be the factor corresponding to $\Pi_{(\sigma_{i})_{i},\Gamma,X}$, it is easy to see that}
\[h_{(\sigma_{i})_{i},\mu}(Y:X,\Gamma)\leq 0.\]
\emph{And that $Y$ is the maximal factor of $X$ with $h_{(\sigma_{i})_{i},\mu}(Y:X,\Gamma)\leq 0.$ We will call $Y$ the} Outer Pinsker Factor \emph{for $\Gamma\actson (X,\mu)$. Similarly, we can show that there is a largest factor $\Gamma\actson (Y_{0},\nu_{0})$ which has entropy zero with respect to $(\sigma_{i})_{i}.$ We will call this the} Pinsker factor of $\Gamma\actson (X,\mu).$
\end{definition}

We have the following corollary of our first theorem.

\begin{cor}\label{C:emeddingstuff} Let $\Gamma$ be a countable discrete sofic group, and $\sigma_{i}\colon\Gamma\to S_{d_{i}}$ a sofic approximation. Let $(Y,\nu)$ be the Pinsker factor for $\Gamma\actson (X,\mu).$ Then  $L^{2}(X)\ominus  L^{2}(Y)$ embeds into $L^{2}(Y,\nu,\ell^{2}(\Gamma))^{\oplus \infty}$ as a representation of the $*$-algebra $L^{\infty}(Y)\rtimes_{\textnormal{alg}}\Gamma.$ Similarly, if $(Y_{0},\nu_{0})$ is the Outer Pinsker factor, then $L^{2}(X)\ominus L^{2}(Y_{0})$ embeds into $L^{2}(Y_{0},\ell^{2}(\Gamma))$ as a representation of $L^{\infty}(Y_{0})\rtimes_{\textnormal{alg}}\Gamma.$

\end{cor}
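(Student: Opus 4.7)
The plan is to argue both embedding statements by contradiction, using Theorem \ref{T:extensionnonsense} together with a Lebesgue-type decomposition of $*$-representations: assuming the embedding fails, I would extract a nonzero singular subrepresentation, and then show that the factor it generates together with $L^{\infty}$ of the base violates the maximality of the (Outer) Pinsker factor.

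The preliminary input is a standard decomposition: given $*$-representations $\pi_{j}\colon A\to B(\mathcal{H}_{j})$, $j=1,2$, of a $*$-algebra $A$, there is a canonical orthogonal $A$-invariant decomposition $\mathcal{H}_{1}=\mathcal{H}_{1}^{ac}\oplus \mathcal{H}_{1}^{sing}$ where $\mathcal{H}_{1}^{ac}$ embeds into $\mathcal{H}_{2}^{\oplus\infty}$ and $\mathcal{H}_{1}^{sing}$ is singular with respect to $\mathcal{H}_{2}$. Using the observation recorded just before Theorem \ref{T:extensionnonsense}, $\mathcal{H}_{1}^{sing}$ can be characterized as the intersection of the kernels of all $T\in \Hom_{A}(\pi_{1},\pi_{2}^{\oplus \infty})$, and dually $\mathcal{H}_{1}^{ac}$ as the closure of the sum of the ranges of all $T\in \Hom_{A}(\pi_{2}^{\oplus\infty},\pi_{1})$; this is essentially the decomposition exploited in \cite{Me6}.

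For the Outer Pinsker statement, I would apply this to $A=L^{\infty}(Y_{0})\rtimes_{\textnormal{alg}}\Gamma$, $\mathcal{H}_{1}=L^{2}(X)\ominus L^{2}(Y_{0})$ (with the representation $\rho$ from the introduction), and $\mathcal{H}_{2}=L^{2}(Y_{0},\ell^{2}(\Gamma))$. Suppose for contradiction $\mathcal{H}_{1}^{sing}\ne 0$, and let $(Z,\zeta)$ be the intermediate factor between $X$ and $Y_{0}$ generated by $\mathcal{H}_{1}^{sing}\cup L^{\infty}(Y_{0})$. Since every nonzero vector of $\mathcal{H}_{1}^{sing}$ lies in $L^{2}(X)\ominus L^{2}(Y_{0})$, hence cannot be $Y_{0}$-measurable, the factor $Z$ strictly refines $Y_{0}$. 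Theorem \ref{T:extensionnonsense} then yields
\[
h_{(\sigma_{i})_{i},\mu}(Z:X,\Gamma)=h_{(\sigma_{i})_{i},\mu}(Y_{0}:X,\Gamma)\leq 0,
\]
contradicting the definition of $Y_{0}$ as the maximal factor with $h_{(\sigma_{i})_{i},\mu}(\,\cdot\,:X,\Gamma)\leq 0$. For the Pinsker statement I would run the same argument with $A=L^{\infty}(Y)\rtimes_{\textnormal{alg}}\Gamma$ and $\mathcal{H}_{1}=L^{2}(X)\ominus L^{2}(Y)$, but invoke the ``in particular'' conclusion of Theorem \ref{T:extensionnonsense}: if the singular part were nonzero, the intermediate factor $(Z,\zeta)$ strictly refining $Y$ would satisfy
\[
h_{(\sigma_{i})_{i},\zeta}(Z,\Gamma)\leq h_{(\sigma_{i})_{i},\nu}(Y,\Gamma)=0,
\]
violating the maximality of the Pinsker factor.

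The only nontrivial ingredient is the Lebesgue-type decomposition for representations of the algebraic crossed product; everything else is a direct application of Theorem \ref{T:extensionnonsense} together with the defining maximality of the (Outer) Pinsker factor. I do not anticipate any real obstacle beyond confirming that the decomposition applies verbatim to the representations $\rho$ and $\lambda$ of $L^{\infty}(Y)\rtimes_{\textnormal{alg}}\Gamma$ (rather than to a $C^{*}$- or von Neumann-algebraic completion), which should follow from the framework of \cite{Me6}.
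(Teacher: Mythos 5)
Your proposal is correct and follows essentially the same route as the paper: decompose $L^{2}(X)\ominus L^{2}(Y)$ into an absolutely continuous and a singular part (Proposition 4.3 of \cite{Me6}), apply Theorem \ref{T:extensionnonsense} to the factor generated by the singular part over the base, and contradict maximality of the (Outer) Pinsker factor. The only differences are cosmetic --- the paper argues one vector $\xi\in\mathcal{H}_{s}$ at a time rather than with the whole singular subspace, and you are if anything more careful in separating the Pinsker case (which needs the ``in particular'' inequality $h_{(\sigma_{i})_{i},\zeta}(Z,\Gamma)\leq h_{(\sigma_{i})_{i},\nu}(Y,\Gamma)\leq 0$) from the Outer Pinsker case (which uses the extension-entropy equality directly).
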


\begin{proof}
We do the proof only for the Pinsker factor, as the proof for the Outer Pinsker factor is the same. As in Proposition 4.3 of \cite{Me6} we may write
 \[L^{2}(X)\ominus L^{2}(Y)=\mathcal{H}_{a}\oplus \mathcal{H}_{s}\]
 where $\mathcal{H}_{a},\mathcal{H}_{s}$ are subrepresentation of the $*$-algebra $L^{\infty}(Y)\rtimes_{\textnormal{alg}}\Gamma,$ with $\mathcal{H}_{a}$ embedding into $L^{2}(Y,\nu,\ell^{2}(\Gamma))^{\oplus\infty}$ and $\mathcal{H}_{s}$ is singular with respect to $L^{2}(Y,\nu,\ell^{2}(\Gamma))$ as representations of $L^{\infty}(Y)\rtimes_{\textnormal{alg}}\Gamma.$ Let $\xi\in \mathcal{H}_{s}$ and let $\mathcal{A}$ be the smallest $\sigma$-algebra of measurable subsets of $X$ generated by $Y$ and $\xi.$ Let $(Z,\eta)$ be the factor of $(X,\mu)$ corresponding to $\mathcal{A}.$ Thus $(Z,\eta)$ is an intermediate factor of $(X,\mu)\to (Y,\nu).$ We know $\mathcal{K}=\overline{\Span\{g\xi:g\in\Gamma\}}^{\|\cdot\|_{2}}$ generates $Z$ over $Y.$ Tautologically, $\mathcal{K}$ is singular with respect to $L^{2}(Y,\nu,\ell^{2}(\Gamma))$ as a representation of $L^{\infty}(Y,\nu)\rtimes_{\textnormal{alg}}\Gamma.$ Thus by the preceding Theorem,
 \[h_{(\sigma_{i})_{i},\nu}(Z:X)=h_{(\sigma_{i})_{i},\nu}(Y:X)\leq 0.\]
 By maximality of the Outer Pinsker factor, we see that $Z=Y.$ This implies that $\xi\in L^{2}(Y),$ and thus that $\mathcal{H}_{s}=0.$

\end{proof}

We use the preceding  Corollary to show mixing and ergodicity properties of the factor $\pi\colon (X,\mu)\to (Y,\nu).$ We first recall some definitions. We use $\EE_{Y}(f)$ for the conditional expectation onto $Y$ of $f\in L^{1}(X,\mu).$ We will typically view $L^{2}(Y)$ inside of $L^{2}(X)$ by $f\mapsto f\circ \pi.$

\begin{definition}\emph{Let $\Gamma$ be a countable discrete group. Let $(X,\mu)$ be a standard probability space and $\Gamma\actson (X,\mu)$ a measure-preserving action. Let $\Gamma\actson (Y,\nu)$ be a factor of $(X,\mu)$ and $\alpha\colon \Gamma\to \mathcal{U}(L^{2}(X,\mu))$ be the Koopman representation. We say that the extension $\Gamma\actson (X,\mu)$ is mixing relative to $\Gamma\actson (Y,\nu)$ if for all $f,h\in L^{\infty}(X,\mu)$ with $\EE_{Y}(f)=0=\EE_{Y}(h)$ we have}
\[\lim_{g\to\infty}\|\EE_{Y}(\alpha(g)(f)h)\|_{L^{2}(Y)}=0.\]
\emph{We also that the extension $(X,\mu)\to (Y,\nu)$ is mixing. We say that the extension $(X,\mu)$ has spectral gap over $(Y,\nu)$ if for every sequence $\xi_{n}\in L^{2}(X)$ such that}
\[\lim_{n\to\infty}\|\alpha_{g}(\xi_{n})-\xi_{n}\|_{L^{2}(X)}=0 \mbox{\emph{ for all $g\in\Gamma$,}}\]
\emph{we have}
\[\lim_{n\to\infty}\|\xi_{n}-\EE_{Y}(\xi_{n})\|_{L^{2}(X)}=0.\]
\emph{We remark that is easy to see that $(X,\mu)$ has spectral gap over $(Y,\nu)$ if and only if $L^{2}(X)\ominus L^{2}(Y)$ has spectral gap as a representation of $\Gamma.$}

\end{definition}

For the proof we introduce some notation. Let $(Y,\nu)$ be as in the proceeding definition. For $f,h\in L^{\infty}(Y,\nu,\ell^{2}(\Gamma))$ we let $\ip{f,h}_{Y}\in L^{\infty}(Y,\nu)$ be defined by
\[\ip{f,h}_{Y}(y)=\sum_{g\in\Gamma}f(y)(g)\overline{h(y)(g)}.\]
For $f\in L^{\infty}(Y,\nu),\xi\in \ell^{2}(\Gamma)$ we let $f\otimes \xi\in L^{\infty}(X,\mu,\ell^{2}(\Gamma))$ be defined by
\[(f\otimes \xi)(y)(g)=f(y)\xi(g).\]
We let $\lambda_{Y}\colon\Gamma\to B(L^{\infty}(Y,\nu,\ell^{2}(\Gamma)))$ be defined by
\[(\lambda_{Y}(g)f)(y)(h)=f(g^{-1}y)(g^{-1}h).\]

\begin{theorem} Let $\Gamma$ be a countable discrete sofic group with sofic approximation $\sigma_{i}\colon\Gamma\to S_{d_{i}},$ and let $\Gamma\actson (X,\mu)$ be a probability measure-preserving action. Let $(Y,\nu),(Y_{0},\nu_{0})$ be the Pinsker and Outer Pinsker factors, respectively, of $\Gamma\actson (X,\mu).$

(i): The extension
\[\Gamma\actson (X,\mu)\to \Gamma\actson (Y,\nu)\]
is mixing. In particular,
\[\Gamma\actson (X,\mu)\to \Gamma\actson (Y_{0},\nu_{0})\]
is a mixing extension.

 (ii): If $\Lambda$ is any nonamenable subgroup of $\Gamma$,then
\[\Lambda\actson (X,\mu)\to \Lambda\actson (Y,\nu)\]
has spectral gap. In particular, the extension
\[\Lambda\actson (X,\mu)\to \Lambda\actson (Y_{0},\nu_{0})\]
has spectral gap.
\end{theorem}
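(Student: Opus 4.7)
The plan is to deduce both parts from Corollary \ref{C:emeddingstuff}, which furnishes an $L^{\infty}(Y)\rtimes_{\textnormal{alg}}\Gamma$-equivariant isometric embedding $T\colon L^{2}(X)\ominus L^{2}(Y)\hookrightarrow L^{2}(Y,\nu,\ell^{2}(\Gamma))^{\oplus\infty}$ (and an analogous one over the Outer Pinsker factor $Y_{0}$). All the work is then transferred to the target representation, whose structure is explicit.

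For part (i), first note that because $T$ is $L^{\infty}(Y)$-linear and isometric, testing against $f\in L^{\infty}(Y)$ shows that $T$ preserves the $L^{\infty}(Y)$-valued inner product
\[\langle\alpha,\beta\rangle_{Y}(y)=\sum_{n}\sum_{g\in\Gamma}\alpha_{n}(y)(g)\overline{\beta_{n}(y)(g)};\]
explicitly, $\EE_{Y}(\xi\overline{\eta})=\langle T\xi,T\eta\rangle_{Y}$ almost everywhere for $\xi,\eta\in L^{2}(X)\ominus L^{2}(Y)$. In particular, for $\xi,\eta\in L^{\infty}(X)$ with $\EE_{Y}(\xi)=\EE_{Y}(\eta)=0$ one has $\|\langle T(\xi),T(\xi)\rangle_{Y}\|_{L^{\infty}(Y)}\leq\|\xi\|_{\infty}^{2}$, and the desired mixing reduces to
\[\|\langle\lambda_{Y}(g)T(\xi),T(\eta)\rangle_{Y}\|_{L^{2}(Y)}\to 0\text{ as }g\to\infty.\]
A direct computation on elementary vectors $f_{1}\otimes\delta_{k_{1}},f_{2}\otimes\delta_{k_{2}}\in L^{2}(Y,\ell^{2}(\Gamma))$ (with $f_{j}\in L^{\infty}(Y)$, $k_{j}\in\Gamma$) gives
\[\langle\lambda_{Y}(g)(f_{1}\otimes\delta_{k_{1}}),f_{2}\otimes\delta_{k_{2}}\rangle_{Y}(y)=f_{1}(g^{-1}y)\overline{f_{2}(y)}\,\delta_{gk_{1},k_{2}},\]
which vanishes identically in $y$ unless $g=k_{2}k_{1}^{-1}$. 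Approximating $T(\xi),T(\eta)$ in $L^{2}$-norm by finite $L^{\infty}(Y)$-combinations $u_{n},v_{n}$ of such elementary vectors, the matrix coefficient $g\mapsto\langle\lambda_{Y}(g)u_{n},v_{n}\rangle_{Y}$ vanishes off a finite subset of $\Gamma$. The approximation errors are controlled by the pointwise Cauchy--Schwarz bound $|\langle\lambda_{Y}(g)\alpha,\beta\rangle_{Y}(y)|\leq\|\alpha(g^{-1}y)\|_{\ell^{2}}\|\beta(y)\|_{\ell^{2}}$, the uniform $L^{\infty}(Y)$-bound on $T(\xi)$, and the $\Gamma$-invariance of $\nu$; sending $n\to\infty$ yields the vanishing. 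The same argument applied to the embedding over $Y_{0}$ proves the ``in particular'' clause.

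For part (ii), it suffices to show that the $\Lambda$-representation on $L^{2}(X)\ominus L^{2}(Y)$ has no nonzero almost invariant vectors. Corollary \ref{C:emeddingstuff} embeds it $\Lambda$-equivariantly into $L^{2}(Y,\ell^{2}(\Gamma))^{\oplus\infty}$, so it is enough to verify that $L^{2}(Y,\ell^{2}(\Gamma))$ has no nonzero $\Lambda$-almost invariant vectors. Identifying $L^{2}(Y,\ell^{2}(\Gamma))\cong L^{2}(Y)\otimes\ell^{2}(\Gamma)$ as $\Gamma$-representations (with $\Gamma$ acting diagonally via the Koopman representation on $L^{2}(Y)$ and the left regular representation on $\ell^{2}(\Gamma)$), and decomposing $\ell^{2}(\Gamma)=\bigoplus_{t\in\Lambda\backslash\Gamma}\ell^{2}(\Lambda t)$ into copies of the left regular representation of $\Lambda$, the Fell absorption principle shows that $L^{2}(Y,\ell^{2}(\Gamma))|_{\Lambda}$ is isomorphic to a multiple of the left regular representation of $\Lambda$. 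Nonamenability of $\Lambda$ then provides spectral gap, a property inherited by subrepresentations; the ``in particular'' clause follows from the same argument over $Y_{0}$. The main bookkeeping difficulty is in the approximation argument of (i): one must make sure that the pointwise $L^{\infty}(Y)$-control inherited by $T(\xi)$ from $\xi\in L^{\infty}(X)$ is strong enough to convert $L^{2}$-approximations of $T(\xi),T(\eta)$ into $L^{2}(Y)$-control on matrix coefficients, and the pointwise Cauchy--Schwarz estimate above is precisely what makes this possible.
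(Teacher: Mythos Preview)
Your proof is correct and follows essentially the same route as the paper: both parts are deduced from the embedding of Corollary~\ref{C:emeddingstuff}, part~(i) via the $L^{\infty}(Y)$-valued inner product identity $\EE_{Y}(\alpha_{X}(g)(\xi)\overline{\eta})=\langle\lambda_{Y}(g)T\xi,T\eta\rangle_{Y}$ followed by approximation by finitely supported vectors, and part~(ii) via Fell absorption. The one notable difference is that the paper invokes direct integral theory (decomposing $U=\int_{Y}^{\oplus}U_{y}\,d\nu(y)$) to obtain the identity for $\EE_{Y}$, whereas you derive it more directly by testing against $f\in L^{\infty}(Y)$ and using $L^{\infty}(Y)$-linearity of $T$; your route is slightly more elementary and entirely adequate. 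Your handling of the ``in particular'' clauses---simply rerunning the argument with the separate embedding over $Y_{0}$ furnished by Corollary~\ref{C:emeddingstuff}---is also clean.
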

\begin{proof}

Throughout, we shall let $\alpha_{X}\colon\Gamma\to \mathcal{U}(L^{2}(X)),$ $\alpha_{Y}\colon\Gamma\to \mathcal{U}(L^{2}(Y))$ be the Koopman representations.

(i): We shall use direct integral theory. See \cite{Taka}  IV.8 for the appropriate background. By the preceding Theorem, there is a $L^{\infty}(Y,\nu)\rtimes_{\textnormal{alg}}\Gamma$-equivariant, isometric, linear map
\[U\colon L^{2}(X)\ominus L^{2}(Y)\to L^{2}(Y,\mu,\ell^{2}(\Gamma))^{\oplus \infty}.\]
Let $\pi\colon (X,\mu)\to (Y,\nu)$ be the factor map. We may assume that $X,Y$ are standard Borel spaces and that $\pi$ is Borel. Let
\[(X,\mu)=\int_{Y}(X_{y},\mu_{y})\,d\nu(y)\]
be the disintegration. This means that $X_{y}=\pi^{-1}(\{y\}),$ that $\mu_{y}$ is a Borel probability measure supported on $X_{y},$ that $y\mapsto \mu_{y}(E)$ is measurable for all $E\subseteq X$ measurable and that
\[\int_{Y}\mu_{y}(E)\,d\nu(y)=\mu(E)\]
for all $E\subseteq X$ measurable. This allows us to regard
\[ L^{2}(X)\ominus L^{2}(Y)=\int_{Y}^{\oplus} L^{2}(X_{y},\mu_{y})\ominus \CC1\,d\nu(y),\]
\[L^{2}(X,\ell^{2}(\Gamma))=\int_{Y}^{\oplus}\ell^{2}(\Gamma)\,d\nu(y).\]
We may regard $\alpha_{X}(g)$ as a map $L^{2}(X_{y},\mu_{y})\to L^{2}(X_{gy},\mu_{gy}).$
Since $U$ is $L^{\infty}(Y)$-equivariant, we may argue as in \cite{Taka} Theorem IX.7.10 to see that there is a measurable field $U_{y}\in B(L^{2}(X_{y})\ominus \CC1,\ell^{2}(\Gamma))$ of isometric, linear maps so that
\[U=\int_{Y}^{\oplus}U_{y}\,d\nu(y).\]
By $\Gamma$-equivariance we have that $U_{gy}(\alpha_{X}(g)(\xi))=\lambda(g)U_{y}(\xi),$ for $\xi\in L^{2}(X_{y})\ominus \CC1.$
It is easy to see that for $f,h\in L^{2}(X)\ominus L^{2}(Y)$
\[\ip{\lambda_{Y}(g)U(f),U(h)}_{Y}=\EE_{Y}(\alpha_{X}(g)(f)h^{*}).\]
It thus suffices to show that
\[\lim_{g\to\infty}\|\ip{\lambda_{Y}(g)\xi,\eta}_{Y}\|_{2}=0\]
for $\xi,\eta\in L^{\infty}(X,\mu,\ell^{2}(\Gamma))$ with
\[\|\xi\|_{\infty},\|\eta\|_{\infty}\leq 1.\]
We first do this when
\[\xi=\sum_{s\in\Gamma}\xi_{s}\otimes\delta_{s}\]
\[\eta=\sum_{s\in\Gamma}\eta_{s}\otimes \delta_{s}\]
with all but finitely many terms in each sum equal to $0.$
It is then easy to see that
\[\ip{\lambda_{Y}(g)\xi,\eta}=0\]
for all $g$ outside a finite set. The case of a general $\xi,\eta$ follows by approximation as in Proposition 7.5 of \cite{Me6}.

The ``in particular'' follows as
\[\|\EE_{Y_{0}}((\xi\circ g^{-1})\eta)\|_{2}\leq \|E_{Y}((\xi\circ g^{-1})\eta)\|_{2}.\]

(ii): We consider the restriction of the action of $L^{\infty}(Y)\rtimes_{\textnormal{alg}}\Gamma$ to $\Gamma.$ By the preceding Corollary, this unitary representation of $\Gamma$ embeds into  $L^{2}(Y,\ell^{2}(\Gamma))^{\oplus\infty}.$
The unitary representation $L^{2}(Y,\ell^{2}(\Gamma))^{\oplus\infty}$ of $\Gamma$ is canonically isomorphic to $(\alpha_{Y}\otimes \lambda_{\Gamma})^{\oplus\infty}.$ By Fell's absorption principle, we know that  $\alpha_{Y}\otimes \lambda_{\Gamma}\leq \lambda_{\Gamma}^{\oplus\infty}.$ So the unitary representation $L^{2}(X)\ominus L^{2}(Y)$ of $\Gamma$ embeds into $\lambda_{\Gamma}^{\oplus\infty}.$ If we regard $L^{2}(X)\ominus L^{2}(Y)$ as a representation of $\Lambda$ then, by restriction, this representation embeds into $\lambda_{\Lambda}^{\oplus \infty}.$ By nonamenability of $\Lambda$ we have that $\lambda_{\Lambda}^{\oplus\infty}$ has spectral gap. Thus the extension of $\Lambda$-actions
\[\Lambda\actson (X,\mu)\to \Lambda\actson (Y,\nu)\]
has spectral gap. The ``in particular'' part follows from a similar analysis as in part $(i).$

\end{proof}

Now, suppose we are given a standard probability space $(X,\mathcal{M},\mu)$ and a countable discrete group $\Gamma$ with $\Gamma\actson (X,\mathcal{M},\mu)$ by measure-preserving transformations. Recall that the \emph{Furstenberg Tower} is a tower of complete, $\Gamma$-invariant subsigma-algebras $\mathcal{M}_{\alpha}$ of $\mathcal{M}$ indexed by ordinals $\alpha$ less than or equal to some countable ordinal $\lambda$ defined by:
\begin{enumerate}[(a):]
\item $\mathcal{M}_{0}\mbox{ is the sigma-algebra of sets which are either null or conull},$
\item if $\alpha$ is a successor ordinal and
\[(X_{\alpha},\mu_{\alpha})\to (X_{\alpha-1},\mu_{\alpha-1})\]
is the factor map corresponding to $\mathcal{M}_{\alpha-1}\subseteq \mathcal{M}_{\alpha},$ then $\mathcal{M}_{\alpha}$ is the largest sub-sigma-algebra of $\mathcal{M}$ with the property that the extension
\[(X_{\alpha},\mu_{\alpha})\to (X_{\alpha-1},\mu_{\alpha-1})\]
is compact,
\item if $\alpha$ is a limit-ordinal, then $\mathcal{M}_{\alpha}$ is the sigma-algebra generated by
\[\bigcup_{\alpha'<\alpha}M_{\alpha'},\]
\item if $(X_{\lambda},\mu_{\lambda})$ is the factor of $(X,\mu)$ corresponding to the sub-sigma-algebra $\mathcal{M}_{\lambda},$ then
\[(X,\mu)\to (X_{\lambda},\mu_{\lambda})\]
is a weakly mixing extension.
\end{enumerate}

Motivated by Theorem \ref{T:extensionnonsense}, we make the following definition.

\begin{definition}\emph{Let $(X,\mathcal{M},\mu)$ be a standard probability space and $\Gamma$ a countable discrete group with $\Gamma\actson (X,\mu)$ by automorphisms. Define, for every countable ordinal $\alpha,$ a family of complete, $\Gamma$-invariant, sub-sigma-algebras of $\mathcal{M}$ as follows:}
\begin{enumerate}[(i):]
\item $\mathcal{M}_{0}$\emph{ is the algebra of all null or conull sets}
\item \emph{if $\alpha$ is successor ordinal, and $(X_{\alpha-1},\mu_{\alpha-1})$ is the factor of $(X,\mu)$ corresponding to $\mathcal{M}_{\alpha-1},$ then we define $\mathcal{M}_{\alpha}$ to be the sigma-algebra generated by $\xi^{-1}(A),$ where $A$ is a Borel subset of $\CC,$ and $\xi\in L^{2}(X,\mu)$ has the property that}
\[\overline{\{\rho(f)\xi:f\in L^{\infty}(X_{\alpha-1},\mu_{\alpha-1})\rtimes_{\alg}\Gamma\}}^{\|\cdot\|_{2}}\]
\emph{is singular with respect to $\lambda_{X_{\alpha-1}}$ as a representation of $L^{\infty}(X_{\alpha-1},\mu_{\alpha-1})\rtimes_{\textnormal{alg}}\Gamma.$ }
\item \emph{if $\alpha$ is a limit ordinal, we let $\mathcal{M}_{\alpha}$ be the sigma-algebra generated by}
\[\bigcup_{\alpha'<\alpha}\mathcal{M}_{\alpha'}.\]
\end{enumerate}
\emph{We call $(\mathcal{M}_{\alpha})_{\alpha\leq \lambda}$ the} spectral tower.
\end{definition}

Note that if $(\mathcal{N}_{\alpha}),(\mathcal{M}_{\alpha})$ are the Furstenberg-Zimmer and spectral towers,respectively, then $\mathcal{N}_{\alpha}\subseteq \mathcal{M}_{\alpha}.$ Additionally, by iterated applications of Theorem \ref{T:extensionnonsense}, we find that if $(X_{\alpha},\mu_{\alpha})$ is the factor of $(X,\mu)$ corresponding to $\mathcal{M}_{\alpha}$ then
\[h_{(\sigma_{i})_{i},\mu_{\alpha}}(X_{\alpha},\Gamma)\leq 0.\]
 This is another way to see that distal measure-theoretic actions have nonpositive sofic entropy. Additionally, if $\lambda$ is the first ordinal for which $\mathcal{M}_{\lambda}=\mathcal{M}_{\lambda+1},$ and $(X_{\lambda},\mu_{\lambda})$ is the factor corresponding to $\mathcal{M}_{\lambda},$ then
\[L^{2}(X)\ominus L^{2}(X_{\lambda})\]
regarded as a representation of $L^{\infty}(X_{\lambda})\rtimes_{\textnormal{alg}}\Gamma$ embeds into $L^{2}(X_{\lambda},\ell^{2}(\Gamma))^{\oplus \infty}.$ This gives another perspective of the proof of Corollary \ref{C:emeddingstuff}.

It appears that the analogues of our results are not known for Rokhlin entropy. Thus we ask the following question.

\begin{question}\emph{The spectral tower makes sense for actions of arbitrary countable discrete groups. Is it true that if $(\mathcal{M}_{\alpha})_{\alpha}$ is the spectral tower of a probability measure-preserving action $\Gamma\actson (X,\mathcal{M},\mu)$ and if $\mathcal{M}_{\alpha}=\mathcal{M}$ for some $\alpha,$ then the Rokhlin entropy of $\Gamma\actson (X,\mu)$ is zero? Less ambitiously, is the Rokhlin entropy finite?}\end{question}

\appendix

\section{Extension Entropy in the Amenable Case}

The goal of this section is to prove that when $\Gamma$ is amenable group, then the extension entropy of
\[\Gamma\actson (X,\mu)\to \Gamma\actson (Y,\nu)\]
is just the entropy of $\Gamma\actson (Y,\nu).$ Though technical, our argument is fairly intuitive: we wish to show that every microstate for $\Gamma\actson (Y,\nu)$ ``lifts'' to one for $\Gamma\actson (X,\mu).$

The following Lemma is a formal version of this intuitive statement. It is essentially a direct translation of a result of Popa in \cite{PoppArg} (when the action is free the result is due to Pauenscu in \cite{LPaun}, but we will need the general version of Popa). We will use ultrafilters in our proof. If $\omega$ is a free ultrafilter on $\NN,$ then we say that $A\subseteq \NN$ is $\omega$-large if $A\in \omega.$ 

We also need the notion of a tracial ultraproduct of von Neumann algebras. Fix a free ultrafilter $\omega$ on $\NN.$ Suppose that $(M_{n},\tau_{n})_{n=1}^{\infty}$ is a sequence of tracial von Neumann algebras (i.e. $M_{n}$ are von Neumann algebras and $\tau_{n}$ is a faithful, normal, tracial state on $M_{n}$). Let
\[M=\prod_{n\to\omega}(M_{n},\tau_{n})=\frac{\{(x_{n})_{n=1}^{\infty}:x_{n}\in M_{n},\sup_{n}\|x_{n}\|<\infty\}}{\{(x_{n})_{n=1}^{\infty}:\sup_{n}\|x_{n}\|<\infty,\lim_{n\to\omega}\tau_{n}(x_{n}^{*}x_{n})=0\}}.\]
If $(x_{n})_{n=1}^{\infty}$ is a sequence with $x_{n}\in M_{n}$ and $\sup_{n}\|x_{n}\|<\infty,$ we use $(x_{n})_{n\to\omega}$ for the image of $(x_{n})_{n=1}^{\infty}$ in $M$ under the quotient map. There is a well-defined trace
\[\tau_{\omega}\colon \prod_{n\to\omega}(M_{n},\tau_{n})\to \CC\]
given by
\[\tau_{\omega}((x_{n})_{n\to\omega})=\lim_{n\to\omega}\tau_{n}(x_{n}).\]
We write
\[(M,\tau_{\omega})=\prod_{n\to\omega}(M_{n},\tau)\]
and say that $(M,\tau_{\omega})$ is the tracial ultraproduct of $(M_{n},\tau_{n})_{n=1}^{\infty}.$  Note that $M$ is a $*$-algebra with the operations being coordinate wise. Let $L^{2}\left(\prod_{n\to\omega}(M_{n},\tau_{n})\right)$ be the GNS completion of $M$ with respect to $\tau_{\omega}$ and let
\[\lambda\colon M\to B\left(L^{2}\left(\prod_{n\to\omega}(M_{n},\tau_{n})\right)\right)\]
be given by the GNS representation. It turns out that $\lambda(M)$ is a von Neumann algebra acting on $B\left(L^{2}\left(\prod_{n\to\omega}(M_{n},\tau_{n})\right)\right)$ (see \cite{BO} Lemma A.9). Thus we may regard $\prod_{n\to\omega}(M_{n},\tau_{n})$ as a tracial von Neumann algebra.

\begin{lemma}\label{L:approximatelift} Let $\Gamma$ be a countable, discrete, amenable group and $\sigma_{i}\colon\Gamma\to S_{d_{i}}$ a sofic approximation. Let $X,Y$ be compact, metrizable spaces with $\Gamma\actson X,\Gamma\actson Y$ by homeomorphisms and suppose that there is a factor map $\pi\colon X\to Y.$ Let $\Delta_{X},\Delta_{Y}$ be dynamically generating pseudometrics on $X,Y$ respectively. Let $\mu\in \Prob_{\Gamma}(X),\nu\in \Prob_{\Gamma}(Y)$ be such that $\pi_{*}\mu=\nu.$ Then for every finite $F\subseteq\Gamma,L\subseteq C(X)$ and $\delta,\varepsilon>0,$ there exists finite $F'\subseteq\Gamma,L'\subseteq C(Y)$ and $\delta'>0$ so that for all large $i$
\[\Map_{\nu}(\Delta_{Y},F',\delta',L',\sigma_{i})\subseteq_{\varepsilon,\Delta_{Y,2}} \pi\circ \Map_{\mu}(\Delta_{X},F,\delta,L,\sigma_{i}).\]

\end{lemma}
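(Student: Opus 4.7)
The plan is to argue by contradiction, pushing the hypothetical counterexample into a tracial ultraproduct and then applying Popa's extension theorem (Corollary 5.2 of \cite{PoppArg}) which crucially uses amenability of $\Gamma$.

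Suppose the conclusion fails: there exist finite $F\subseteq \Gamma$, $L\subseteq C(X)$ and $\delta,\varepsilon>0$ such that no choice of $F',L',\delta'$ works. Fix a free ultrafilter $\omega$ on $\NN$. A diagonal extraction over exhausting sequences $F'_i\nearrow \Gamma$, $L'_i$ dense in $C(Y)$, $\delta'_i\searrow 0$ then produces, on an $\omega$-large set of $i$, microstates $\phi_i\in \Map_\nu(\Delta_Y,F'_i,\delta'_i,L'_i,\sigma_i)$ such that
\[
\Delta_{Y,2}(\pi\circ\psi,\phi_i)\geq \varepsilon \text{ for every } \psi\in \Map_\mu(\Delta_X,F,\delta,L,\sigma_i).
\]

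Next, let $M=\prod_{i\to\omega}(M_{d_i}(\CC),\tr)$ be the tracial ultraproduct. The sequence $(\phi_i)$ defines, first on $C(Y)$ and then by extension, a trace-preserving $*$-homomorphism $\theta\colon L^\infty(Y,\nu)\to M$ via $\theta(f)=(m_{f\circ\phi_i})_{i\to\omega}$. The sofic approximation $\sigma_i$ yields unitaries $v_g=(\sigma_i(g))_{i\to\omega}\in M$ satisfying $v_{gh}=v_gv_h$ (almost-multiplicativity becoming equality in the ultraproduct) and the microstate condition on $\phi_i$ forces $v_g\theta(f)v_g^{*}=\theta(f\circ g^{-1})$. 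We therefore obtain a trace-preserving embedding $\Theta\colon L^\infty(Y,\nu)\rtimes \Gamma\hookrightarrow M$ sending $u_g\mapsto v_g$.

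Here is the main step. Since $\Gamma$ is amenable, Popa's Corollary 5.2 of \cite{PoppArg} (which generalizes P\u{a}unescu's Proposition 1.20 of \cite{LPaun} from free to arbitrary actions) asserts that such an embedding extends to a trace-preserving embedding $\widetilde{\Theta}\colon L^\infty(X,\mu)\rtimes\Gamma\hookrightarrow M$ which agrees with $\Theta$ on $L^\infty(Y)\rtimes\Gamma$, in particular continues to send $u_g\mapsto v_g$. This is the step that fails without amenability and is the heart of the argument.

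Finally, one unpacks $\widetilde{\Theta}$ restricted to $L^\infty(X,\mu)$: on an $\omega$-large set of indices $i$ one obtains trace-preserving $*$-homomorphisms $L^\infty(X,\mu)\to \ell^\infty(d_i)$ which, because the domain is separable and abelian, can be realized (modulo arbitrarily small error in $\|\cdot\|_2$) as $f\mapsto f\circ\psi_i$ for some measurable $\psi_i\colon\{1,\dots,d_i\}\to X$. Since $\widetilde{\Theta}$ intertwines $\sigma$ on the $\Gamma$ side and agrees with $\theta$ on $L^\infty(Y)$, the maps $\psi_i$ satisfy $\psi_i\in \Map_\mu(\Delta_X,F,\delta,L,\sigma_i)$ and $\Delta_{Y,2}(\pi\circ\psi_i,\phi_i)<\varepsilon$ for all sufficiently good $i$ in the $\omega$-large set, contradicting our standing assumption. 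The main obstacle is the invocation of Popa's extension theorem in a way that preserves the sofic approximation on the group part; after this is done, the passage back from the ultraproduct to honest microstates is essentially the standard realization of abelian embeddings into $\ell^\infty(d_i)$ by point-evaluation.
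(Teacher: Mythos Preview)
Your overall strategy---contradiction, push to a tracial ultraproduct, invoke Popa---is the same as the paper's, but there is a genuine gap in the way you deploy Popa's result and in the ``unpacking'' step.

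The paper does \emph{not} use Corollary~5.2 of \cite{PoppArg} as an extension theorem. Instead it first uses amenability a second time: since $h_{(\sigma_i)_i,\mu}(X,\Gamma)\ge 0$ (by \cite{KLi2}), there exist explicit point maps $\psi_n\colon\{1,\dots,d_{i_n}\}\to X$ with $(\psi_n)_*u_{d_{i_n}}\to\mu$ and asymptotic equivariance. These give a second embedding $\overline{\Psi}\colon L^\infty(X,\mu)\rtimes\Gamma\to M$ which, by construction, maps $L^\infty(X)$ into the diagonal ultraproduct $\prod_{n\to\omega}\ell^\infty(d_{i_n})$. Popa's result is then used as a \emph{conjugacy} statement: there is a permutation $p=(p_n)_{n\to\omega}\in\prod_{n\to\omega}S_{d_{i_n}}$ with $\overline{\Phi}=p^{-1}\overline{\Psi}\circ\iota\,p$ on $L^\infty(Y)\rtimes\Gamma$. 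Because $p$ is a permutation, $\psi_n\circ p_n$ are again honest point maps into $X$, and one checks directly that for $\omega$-many $n$ they lie in $\Map_\mu(\Delta_X,F,\delta,L,\sigma_{i_n})$ with $\Delta_{Y,2}(\pi\circ\psi_n\circ p_n,\phi_n)\to 0$, yielding the contradiction.

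Your version tries to skip the explicit $\psi_n$ by extending $\Theta$ abstractly to $\widetilde{\Theta}\colon L^\infty(X,\mu)\rtimes\Gamma\to M$ with $u_g\mapsto v_g$, and then ``realizing'' $\widetilde{\Theta}|_{L^\infty(X)}$ by point maps. Two problems: first, Popa's corollary (as stated and used here) gives conjugacy, not extension-with-prescribed-unitaries; deriving the latter from the former changes $v_g$ to $p^{-1}v_gp$. Second, and more seriously, nothing forces $\widetilde{\Theta}(L^\infty(X))$ to lie in the diagonal $\prod_{n\to\omega}\ell^\infty(d_{i_n})$, so the claimed realization $f\mapsto f\circ\psi_i$ is unjustified. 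Conjugating an abstract abelian image into the diagonal would require a further unitary that need not be a permutation and would again destroy the condition $u_g\mapsto v_g=(\sigma_i(g))_{i\to\omega}$, which is exactly what you need to conclude $\psi_i\in\Map_\mu(\Delta_X,F,\delta,L,\sigma_i)$. The paper's device of building $\overline{\Psi}$ from explicit microstates is precisely what circumvents this issue.
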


\begin{proof}

	Write
	\[\Gamma=\bigcup_{n=1}^{\infty}F_{n}',\]
	\[C(X)=\overline{\bigcup_{n=1}^{\infty}L_{n}'}\]
where $F_{n}'\subseteq \Gamma,L_{n}'\subseteq C(X)$ are increasing sequences of finite sets. Let $\delta_{n}'>0$ be a decreasing sequence converging to zero. Suppose that the Lemma is false, then we may find a finite $F\subseteq\Gamma,L\subseteq C(X)$ and $\delta,\varepsilon>0$ and a strictly increasing sequence of integers $i_{n}$ so that for all $n$ there is a $\phi_{n}\in \Map_{\nu}(\Delta_{X},F_{n}',\delta'_{n},L_{n},\sigma_{i_{n}})$ so that
\[\Delta_{Y,2}(\phi_{n},\pi\circ \psi)\geq \varespilon\mbox{ for all $\psi\in\Map_{\mu}(\Delta_{X},F,\delta,L,\sigma_{i_{n}})$}.\]
Since $\Gamma\actson (X,\mu)$ is amenable, we know by \cite{KLi2} that $h_{(\sigma_{i})_{i},\mu}(X,\Gamma)\geq 0$ and thus we can find $\psi_{n}\in X^{d_{i_{n}}}$ so that
\[\Delta_{X,2}(g\psi_{n},\psi_{n}\circ \sigma_{i_{n}}(g))\to_{n\to\infty}0,\]
\[(\psi_{n})_{*}(u_{d_{i_{n}}})\to \mu\mbox{ weak$^{*}$.}\]

	Fix a free ultrafilter $\omega\in\beta\NN\setminus\NN$ so that $\{i_{n}:n\in\NN\}\in\omega.$ Let
	\[(M,\tr_{\omega})=\prod_{n\to \omega}(M_{d_{i_{n}}}(\CC),\tr).\]
Define
\[\Phi\colon C(Y)\rtimes_{\textnormal{alg}}\Gamma\to M,\]
\[\Psi\colon C(X)\rtimes_{\textnormal{alg}}\Gamma\to M\]
by
\[\Phi(fu_{g})=(m_{f\circ \phi_{n}}\sigma_{i_{n}}(g))_{n\to \omega},\mbox{ for $f\in C(Y),g\in\Gamma$,}\]
\[\Psi(fu_{g})=(m_{f\circ \psi_{n}}\sigma_{i_{n}}(g))_{n\to\omega},\mbox{ for $f\in C(X),g\in\Gamma$,}\]
and extended by linearity. It is easy to see that $\Phi,\Psi$ are $*$-homomorphisms and that $\tr_{\omega}\circ \Phi=\tau_{\nu},\tr_{\omega}\circ \Psi=\tau_{\mu}.$ By uniqueness of GNS representations, we see that $\Phi,\Psi$ extend to normal $*$-homomorphisms
\[\overline{\Phi}\colon L^{\infty}(Y,\nu)\rtimes\Gamma\to M\]
\[\overline{\Psi}\colon L^{\infty}(X,\mu)\rtimes \Gamma\to M\]
so that $\tr_{\omega}\circ \overline{\Phi}=\tau_{\nu},\tr_{\omega}\circ \overline{\Psi}=\tau_{\mu}.$ We have an injective, trace-preserving, normal inclusion
\[\iota:L^{\infty}(Y,\nu)\rtimes \Gamma\to L^{\infty}(X,\mu)\rtimes\Gamma\]
defined densely by
\[\iota(f u_{g})=f\circ \pi u_{g}.\]
By Corollary 5.2 of \cite{PoppArg}, we may find a $p\in \prod_{n\to\omega}S_{d_{i_{n}}}$ so that
\[\overline{\Phi}(x)=p^{-1}\overline{\Psi}(\iota(x))p\mbox{ for all $x\in L^{\infty}(Y,\nu)\rtimes \Gamma$}\]
 (alternatively one can extend $\Psi$ to an embedding of $L^{\infty}(\{0,1\}^{\Gamma}\times X)\rtimes \Gamma$ using the arguments of Theorem 8.1 of \cite{Bow} and then use Proposition 1.20 of \cite{LPaun}). Write $p=(p_{n})_{n\to\omega}$ with $p_{n}\in S_{d_{i_{n}}}.$
The above equality then translates into
\[(m_{f\circ \phi_{n}})_{n\to\omega}=(m_{f\circ \pi\circ \psi_{n}\circ p_{n}})_{n\to\omega},\]
\[(p_{n}\sigma_{i_{n}}(g))_{n\to\omega}=(\sigma_{i_{n}}(g)p_{n})_{n\to\omega}.\]
From the above it is easy to see that
\begin{equation}\label{L:asycommutappendix}
\lim_{n\to\omega}u_{d_{i_{n}}}(\{j:p_{n}(\sigma_{i_{n}}(g)(j))=\sigma_{i_{n}}(p_{n}(j))\})=1.
\end{equation}

	We proceed to get a contradiction by establishing a few preliminary claims. We first claim that
\[\lim_{n\to\omega}\Delta_{Y,2}(\pi\circ \psi_{n}\circ p_{n},\phi_{n})=0.\]
To see this, let $\eta>0$ and let $M$ be the diameter of $\Delta_{Y}.$ Since $Y$ may be identified with the Gelfand spectrum of $C(Y),$ we see that we may find a $\kappa>0$ and $f_{1},\dots,f_{n}\in C(Y)$ so that if $y_{1},y_{2}\in Y$ have
\[\max_{1\leq r\leq n}|f_{j}(y_{1})-f_{j}(y_{2})|<\kappa,\]
then
\[\Delta_{Y}(y_{1},y_{2})<\eta.\]
Since for all $1\leq l\leq n$
\[\lim_{n\to\omega}\|f\circ \phi_{n}-f\circ \pi\circ \psi_{n}\circ p_{n}\|_{2}=0,\]
we see that  for all $1\leq l\leq n$
\[\lim_{n\to\omega}u_{d_{i_{n}}}(\{j:|f_{l}(\phi_{n}(j))-f_{l}(\pi\circ \psi_{n}\circ p_{n}(j))|<\kappa\})=1.\]
Thus, there is an $\omega$-large set of $n$ so that
\[u_{d_{i_{n}}}\left(\bigcap_{l=1}^{n}\{j:|f_{l}(\phi_{n}(j))-f_{l}(\pi\circ \psi_{n}\circ p_{n}(j))|<\kappa\}\right)\geq 1-\eta.\]
By our choice of $f_{1},\dots,f_{n},\kappa,$ we thus see that
\[\Delta_{Y,2}(\pi\circ \psi_{n}\circ p_{n},\phi_{n})^{2}<\eta^{2}+M^{2}(1-\eta)\]
for an $\omega$-large set of $n.$ Thus
\[\lim_{n\to\omega}\Delta_{Y,2}(\pi\circ \psi_{n}\circ p_{n},\phi_{n})\leq \eta^{2}+M^{2}(1-\eta)\]
and letting $\eta\to 0$ proves the first claim.

	The second claim is that
	\[ \psi_{n}\circ p_{n}\in \Map_{\nu}(\Delta_{Y,2},F,\delta,L,\sigma_{i}).\]
for an $\omega$-large set of $n.$ Suppose we grant this claim. The combination of the first and second claims imply that there is an $\omega$-large set of $n$ so that
\[\psi_{n}\circ p_{n}\in \Map_{\nu}(\Delta_{Y,2},F,\delta,L,\sigma_{i})\]
and
\[\Delta_{Y,2}(\phi_{n},\psi_{n}\circ p_{n})<\varepsilon,\]
which contradicts our choice of $\phi_{n}.$ It thus remains to prove the second claim. By standard properties of ultrafilters, to prove the second claim it is enough to show that
\begin{equation}\label{E:alsgaljappendix}
\lim_{n\to\omega}\Delta_{X,2}(\psi_{n}\circ p_{n}\circ \sigma_{i_{n}}(g),g\psi_{n}\circ p_{n})=0\mbox{ for all $g\in\Gamma,$}
\end{equation}
\begin{equation}\label{E:''alsdhgadklappendix}
\lim_{n\to\omega}(\psi_{n}\circ p_{n})_{*}(u_{d_{i_{n}}})=\mu \mbox{ weak$^{*}$.}
\end{equation}
Equation (\ref{E:''alsdhgadklappendix}) is a trivial consequence of the fact that $(\psi_{n})_{*}(u_{d_{i_{n}}})\to \mu$ in the weak$^{*}$ topology. Equation (\ref{E:alsgaljappendix}) is straightforward to argue from (\ref{L:asycommutappendix}) and the fact that
\[\Delta_{X,2}(\psi_{n}\circ \sigma_{i_{n}}(g),g\psi_{n})\to_{n\to\infty}0\mbox{ for all $g\in\Gamma.$}\]
This completes the proof.

\end{proof}

We can now prove that extension entropy agrees with entropy of the factor in the amenable case.

\begin{theorem} Let $\Gamma$ be a countable, discrete, amenable group and $\sigma_{i}\colon\Gamma\to S_{d_{i}}$ a sofic approximation. Let $(X,\mu),(Y,\nu)$ be standard probability spaces with $\Gamma\actson (X,\mu),\Gamma\actson (Y,\nu)$ by measure-preserving transformations and suppose that $\pi\colon X\to Y$ is a factor map. Then
\[h_{(\sigma_{i})_{i},\mu}(Y:X,\Gamma)=h_{\nu}(Y,\Gamma).\]
\end{theorem}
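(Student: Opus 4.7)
The plan is to prove the two inequalities separately, with the hard direction being a clean consequence of Lemma \ref{L:approximatelift} combined with the Kerr--Li result that sofic measure entropy coincides with classical Kolmogorov--Sina\u{\i} entropy in the amenable case.

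For the easy direction $h_{(\sigma_{i})_{i},\mu}(Y:X,\Gamma)\leq h_{\nu}(Y,\Gamma)$, I would first reduce to compact metrizable topological models $X,Y$ carrying dynamically generating pseudometrics $\Delta_X,\Delta_Y$ (so that Theorem \ref{T':topmodyo} applies) and take a continuous, $\Gamma$-equivariant factor map $\pi\colon X\to Y$ with $\pi_*\mu=\nu$. By Lemma \ref{L:asdlghalsdhn}, for any finite $F\subseteq\Gamma, L\subseteq C_b(Y),\delta>0$, there are parameters $F',L',\delta'$ so that $\pi\circ\Map_{\mu}(\Delta_X,F',\delta',L',\sigma_i)\subseteq\Map_{\nu}(\Delta_Y,F,\delta,L,\sigma_i)$ for large $i$. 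Separation numbers respect this inclusion, so $h_{(\sigma_i)_i,\mu}(\Delta_Y:\Delta_X)\leq h_{(\sigma_i)_i,\nu}(\Delta_Y)$. By Theorem \ref{T':topmodyo} the left side equals $h_{(\sigma_i)_i,\mu}(Y:X,\Gamma)$, and by \cite{KLi2} the right side equals the classical entropy $h_\nu(Y,\Gamma)$.

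For the hard direction $h_{(\sigma_{i})_{i},\mu}(Y:X,\Gamma)\geq h_{\nu}(Y,\Gamma)$, I keep the topological setup and apply Lemma \ref{L:approximatelift} with parameter $\varepsilon/4$: for any finite $F\subseteq\Gamma,L\subseteq C(X),\delta>0$ there exist finite $F'\subseteq\Gamma,L'\subseteq C(Y),\delta'>0$ so that for all large $i$
\[\Map_{\nu}(\Delta_Y,F',\delta',L',\sigma_i)\subseteq_{\varepsilon/4,\Delta_{Y,2}}\pi\circ\Map_{\mu}(\Delta_X,F,\delta,L,\sigma_i).\]
Using the elementary estimate $N_{\varepsilon}(C,\Delta)\leq N_{\varepsilon/2}(B,\Delta)$ whenever $C\subseteq_{\varepsilon/4}B$, this yields
\[N_{\varepsilon}(\Map_{\nu}(\Delta_Y,F',\delta',L',\sigma_i),\Delta_{Y,2})\leq N_{\varepsilon/2}\bigl(\pi\circ\Map_{\mu}(\Delta_X,F,\delta,L,\sigma_i),\Delta_{Y,2}\bigr).\]
Taking $\limsup_{i}\frac{1}{d_i}\log$, I obtain $h_{(\sigma_i)_i,\nu}(\Delta_Y,\varepsilon,F',\delta',L')\leq h_{(\sigma_i)_i,\mu}(\Delta_Y:\Delta_X,\varepsilon/2,F,\delta,L)$; since this holds for at least one triple $(F',L',\delta')$, infimizing over such triples on the left (and then over $F,L,\delta$ on the right) gives $h_{(\sigma_i)_i,\nu}(\Delta_Y,\varepsilon)\leq h_{(\sigma_i)_i,\mu}(\Delta_Y:\Delta_X,\varepsilon/2)$. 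Letting $\varepsilon\to 0$ produces
\[h_{(\sigma_i)_i,\nu}(\Delta_Y)\leq h_{(\sigma_i)_i,\mu}(\Delta_Y:\Delta_X),\]
which by Theorem \ref{T':topmodyo} and \cite{KLi2} is precisely the desired inequality.

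The substantive technical content is already encapsulated in Lemma \ref{L:approximatelift}, whose proof rests on Popa's Corollary 5.2 in \cite{PoppArg} and the fact that for amenable groups every sofic approximation admits abundant genuine microstates. Modulo that lemma, the only delicate point in the argument above is ensuring that the quantifiers line up correctly: in Lemma \ref{L:approximatelift} the parameters $F',L',\delta'$ for $Y$ depend on the parameters $F,L,\delta$ for $X$, which is the \emph{right} direction of dependence for bounding $h_{(\sigma_i)_i,\nu}(\Delta_Y,\varepsilon)$ (an infimum on the $Y$-side) by $h_{(\sigma_i)_i,\mu}(\Delta_Y:\Delta_X,\varepsilon/2)$ (an infimum on the $X$-side). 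Everything else is routine manipulation of separation numbers within the Polish-model framework already developed in the paper.
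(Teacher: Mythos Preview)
Your proof is correct and follows essentially the same route as the paper's own argument: both directions reduce to the Polish-model formulation via Theorem \ref{T':topmodyo}, the easy inequality is handled (more explicitly than the paper's ``clear'') via Lemma \ref{L:asdlghalsdhn}, and the hard inequality is deduced from Lemma \ref{L:approximatelift} together with the Kerr--Li identification of sofic and amenable entropy. Your tracking of the $\varepsilon$-constants in the separation-number comparison is in fact slightly cleaner than the paper's, and your discussion of the quantifier dependence is exactly the point.
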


\begin{proof}
	By \cite{BowenAmen},\cite{KLi2} it is enough to show that
	\[h_{(\sigma_{i})_{i},\mu}(Y:X,\Gamma)=h_{(\sigma_{i})_{i},\nu}(Y,\Gamma).\]
It is clear that
\[h_{(\sigma_{i})_{i},\mu}(Y:X,\Gamma)\leq h_{(\sigma_{i})_{i},\nu}(Y,\Gamma),\]
 so we focus on proving the opposite inequality.

 	We may assume that $X,Y$ are compact, metrizable spaces, that the action of $\Gamma$ is by homeomorphisms, that $\mu\in\Prob_{\Gamma}(X),\nu\in\Prob_{\Gamma}(Y)$ and that $\pi$ is continuous. Fix dynamically generating pseudometrics $\Delta_{Y},\Delta_{X}$ on $Y,X.$  Let $\delta,\varespilon>0$ and finite $F\subseteq\Gamma,L\subseteq C(X)$ be given. Let $\delta',F',L'$ be as in the preceding Lemma. We then have that
 \[h_{(\sigma_{i})_{i},\nu}(\Delta_{Y},F',\delta',L',2\varepsilon)\leq h_{(\sigma_{i})_{i},\mu}(\Delta_{Y}:\Delta_{X},F,\delta,L,\varepsilon).\]
A fortiori,
\[h_{(\sigma_{i})_{i},\nu}(\Delta_{Y},2\varepsilon)\leq h_{(\sigma_{i})_{i},\mu}(\Delta_{Y}:\Delta_{X},F,\delta,L,\varepsilon)\]
and taking the infimum over all $F,\delta,L$ we have
\[h_{(\sigma_{i})_{i},\nu}(\Delta_{Y},2\varepsilon)\leq h_{(\sigma_{i})_{i},\mu}(\Delta_{Y}:\Delta_{X},\varepsilon).\]
Letting $\varepsilon\to 0$ completes the proof.

\end{proof}


\begin{thebibliography}{10}

\bibitem{AALP}
A.~Alpeev.
\newblock On {P}insker factors for {R}okhlin entropy.
\newblock In {\em Representation theory, dynamical systems, combinatorial
  methods. Part~XXIV}, volume 432 of {\em Zap. Nauchn. Sem. POMI}, pages
  30--35, St. Petersburg, 2015. POMI.

\bibitem{Bow}
L.~Bowen.
\newblock Measure conjugacy invariants for actions of countable sofic groups.
\newblock {\em J. Amer. Math. Soc}, 23:217--245, 2010.

\bibitem{BowenAmen}
L.~Bowen.
\newblock Sofic entropy and amenable groups.
\newblock {\em Ergodic Theory Dynam. Systems}, 32, 2011.

\bibitem{BO}
N.~Brown and N.~Ozawa.
\newblock {\em $C^{*}$-Algebras and Finite-Dimensional Approximations}.
\newblock Cambridge University Press, 1994.

\bibitem{BurCPE}
P.~Burton.
\newblock Completely positive entropy actions of sofic groups with $\mathbb z$
  in their center.
\newblock {\em arXiv:1509.01861}.

\bibitem{CFW}
A.~Connes, J.~Feldman, and B.~Weiss.
\newblock An amenable equivalence relations is generated by a single
  transformation.
\newblock {\em Ergodic Th. Dyan. Sys}, 10(4):431--450, 1983.

\bibitem{DKP}
K.~Dykema, D.~Kerr, and M.~Pichot.
\newblock Sofic dimension for discrete measurable groupoids.
\newblock {\em Trans. Amer. Math. Soc}, 366(2):707--748, 2013.

\bibitem{ESZ2}
G.~Elek and E.~Szabo.
\newblock Sofic representations of amenable groups.
\newblock {\em Proceedings of the AMS}, 139(2011):4285--4291.

\bibitem{Me6}
B.~Hayes.
\newblock Polish models and sofic entropy.
\newblock {\em arXiv: 1411.1510}.

\bibitem{KerrCPE}
D.~Kerr.
\newblock Bernoulli actions of sofic groups have completely positive entropy.
\newblock {\em Israel J. Math}.
\newblock To appear.

\bibitem{KerrPartition}
D.~Kerr.
\newblock Sofic measure entropy via finite partitions.
\newblock {\em Groups Geom. Dyn}, 7(617-632), 2013.

\bibitem{KLi2}
D.~Kerr and H.Li.
\newblock Soficity, amenability, and dynamical entropy.
\newblock {\em Amer. J. Math}, 135(3):721--761, 2013.

\bibitem{Kieff}
J.~Kieffer.
\newblock A generalized {S}hannon-{M}c{M}illan theorem for the action of an
  amenable group on a probability space.
\newblock {\em Ann.Prob}, 3(6):1031--1037, 1975.

\bibitem{OrnWeiss}
D.~Ornstein and B.~Weiss.
\newblock Entropy and isomorphism theorems for actions of amenable groups.
\newblock {\em J. Analyse. Math.}, 48:1--141, 1987.

\bibitem{LPaun}
L.~Paunescu.
\newblock On sofic actions and equivalence relations.
\newblock {\em Journal of Functional Analysis}, 261(9):2461--2485, November
  2011.

\bibitem{PopaStrongRigidity}
S.~Popa.
\newblock Strong rigidity of $\textrm{II}_{1}$ factors arising from malleable
  actions of w-rigid groups. $\textrm{II}$.
\newblock {\em Invent. Math.}, 101:593--629, 1990.

\bibitem{PopaSG}
S.~Popa.
\newblock On the superrigidity of malleable actions with spectral gap.
\newblock {\em J. Amer. Math. Soc.}, 21(4):981--1000, 2008.

\bibitem{PoppArg}
S.~Popa.
\newblock Independence properties in sublagebras of ultraproduct
  $\textrm{II}_{1}$ factors.
\newblock {\em Journal of Functional Analysis}, 266(9):5818--5846, 2014.

\bibitem{SewardKrieger}
B.~Seward.
\newblock Krieger's finite generator theorem for ergodic actions of countable
  groups $\textrm{I}$.
\newblock {\em arXiv:1405.3604}.

\bibitem{Taka}
M.~Takesaki.
\newblock {\em The Theory of Operator Algebras I}.
\newblock Springer-Verlag, Berline-Heidelberg-New York, 2002.

\end{thebibliography}
\end{document}